\newtheorem{thm}{Theorem}[section]
\newtheorem{cor}[thm]{Corollary}
\newtheorem{prop}[thm]{Proposition}
\newtheorem*{riemann-mapping-theorem}{Riemann Mapping  Theorem}
\newtheorem*{duality}{Duality Theorem}
\newenvironment{pf*}[1]{\proof[#1]}{\endproof}
\newcommand{\grad}{\nabla}
\newcommand{\wl}{\widetilde}
\newcommand{\cal}[1]{{\mathcal #1}}
\newcommand{\beq}{\begin{equation}}
\newcommand{\eeq}{\end{equation}}
\theoremstyle{definition}
\newtheorem{defn}{Definition}[section]
\theoremstyle{remark}
\newtheorem{rem}{Remark}[section]
\newcommand{\dist}{\operatorname{dist}}
\renewcommand{\mod}{\operatorname{mod}}
\newcommand{\tl}{\tilde}
\newcommand{\eps}{\epsilon}
\numberwithin{equation}{section}
\newcommand{\thmref}[1]{Theorem~\ref{#1}}
\newcommand{\propref}[1]{Proposition~\ref{#1}}
\newcommand{\figref}[1]{Figure~\ref{#1}}
\newcommand{\cO}{{\cal O}}
\newcommand{\cI}{{\cal I}}
\newcommand{\cA}{{\cal A}}
\newcommand{\cT}{{\cal T}}
\newcommand{\cP}{{\cal P}}
\newcommand{\cC}{{\cal C}}
\newcommand{\cR}{{\cal R}}
\newcommand{\cD}{{\cal D}}
\newcommand{\cE}{{\cal E}}
\newcommand{\cS}{{\cal S}}
\newcommand{\RR}{{\mathbb R}}
\newcommand{\TT}{{\mathbb T}}
\newcommand{\ZZ}{{\mathbb Z}}
\newcommand{\NN}{{\mathbb N}}
\newcommand{\QQ}{{\mathbb Q}}
\newcommand{\ignore}[1]{{}}
\newcommand{\cir}[1]{\overset{\circ}{#1}}
\begin{document}

\title[Hyperbolicity of renormalization]{Hyperbolicity of renormalization of circle maps with a break-type singularity}

\date{\today}

\author{K. Khanin, M. Yampolsky}
\thanks{The authors were partially supported by NSERC Discovery Grants}
\begin{abstract}We study the renormalization operator of circle homeomorphisms with a break point and show that it possesses a hyperbolic horseshoe attractor.
\end{abstract}

\maketitle
\section{Preliminaries}

\subsection*{Renormalization of homeomorphisms of the circle with a break singularity.}
This work concerns renormalization of  homeomorphisms of the circle $\TT=\RR/\ZZ$ with a break singularity. Specifically, 
these are mappings
$$f:\TT\to\TT$$
 with the following properties:
\begin{itemize}
\item $f\in C^{2+\eps}(\TT\setminus \{0\})$ for some $\eps>0$, and $f\in C^0(\TT)$;
\item $f'(x)>0$ for all $x\neq 0$;
\item $f$ has  one-sided derivatives $f'(0+)>0$ and $f'(0-)>0$ and 
$$f'(0-)/f'(0+)=c^2\neq 1.$$
\end{itemize}

Renormalizations of such maps were extensively studied by the first author and others (see e.g. \cite{KV, KK, KT2}).
We recall the definition of renormalization of a circle homeomorphism $f$ at a point $x_0\in\TT$ (to fix the ideas we will 
set  $x_0=0$)
very briefly, the reader will find a detailed account in any of the above references.
Firstly, let us denote $\rho(f)$ the rotation number of $f$. Denote $\bar f:\RR\to\RR$ the lift of $f$ to the real line via the 
projection $x\mapsto x\mod\ZZ$ with the 
property $\bar f(0)\in[0,1)$. Assume that $\rho(f)\neq 0$, and consider the smallest $r_0\in\NN$ for which 
$$1\in [\bar f^{r_0}(0),\bar f^{r_0+1}(0)).$$
Denote $I_0=[0,\bar f(0)]$, $I_1=[\bar f^{r_0}(0)-1,0]$ and let $\zeta_1:I_0\cup I_1\to I_0\cup I_1$ be the pair of interval homeomorphisms 
$$\zeta_1=(({\bar f}^{r_0}-1)|_{I_0},\bar f|_{I_1}).$$
The first renormalization $\cR(f)=(\eta_1,\xi_1)$ is the rescaled pair $\alpha_1\circ \zeta_1\circ \alpha_1^{-1}$ where $\alpha_1$ is the orientation-reversing  
rescaling $x\mapsto -x/\bar f(0).$ Thus, 
$$\eta_1:[-1,0]\to [-b_1,a_1]\text{ and }\xi_1:[0,a_1]\to [-1,-b_1],$$
where $-b_1=\alpha_1(\bar f^{r_0+1}(0)-1)\in [-1,0]$ and $a_1=\alpha_1(\bar f^{r_0}(0)-1)>0.$

We can now proceed to inductively define a  finite or infinite sequence of renormalizations $\cR^n(f)=(\eta_n,\xi_n)$ as follows. 
Consider the smallest $r_n\in\NN$ such that 
$$0\in [\eta_n^{r_n}(\xi(0)),\eta^{r_n+1}(\xi(0))).$$
We refer to $r_n$ as the {\it height} of $\zeta_n=(\eta_n,\xi_n)$.
If such a number does not exist, then $f$ is renormalizable only $n$ times; set $r_n=\infty$ and terminate the sequence.
Otherwise, set $\alpha_{n+1}(x)=-x/\xi_n(0)$ and let
$\cR^n f=(\eta_{n+1},\xi_{n+1}),$ where
$$\eta_{n+1}=\alpha_{n+1}\circ \eta_n^{r_n}\circ \xi_n\circ \alpha_{n+1}^{-1}:[-1,0]\to [-b_{n+1},a_{n+1}]\text{ and }$$
$$\xi_{n+1}=\alpha_{n+1}\circ \eta_{n}\circ \alpha_{n+1}^{-1}:[0,a_{n+1}]\to [-1,b_{n+1}].$$
It will also be convenient for us to introduce the $n$-th {\it pre-renormalization}
\begin{equation}
\label{prerenorm}
p\cR^n f\equiv (\gamma_n^{-1}\circ \eta_n\circ \gamma_n,\gamma_n^{-1}\circ \xi_n\circ \gamma_n),
\end{equation}
where $$\gamma_n\equiv \alpha_n\circ \alpha_{n-1}\circ\cdots\circ \alpha_1.$$ Thus, 
$p\cR^n f$ is a composition of iterates of the original pair $\zeta_1$, and $\cR^n f$ is its suitable rescaling.

\begin{figure}[htb]
\centerline{\includegraphics[width=1.2\textwidth]{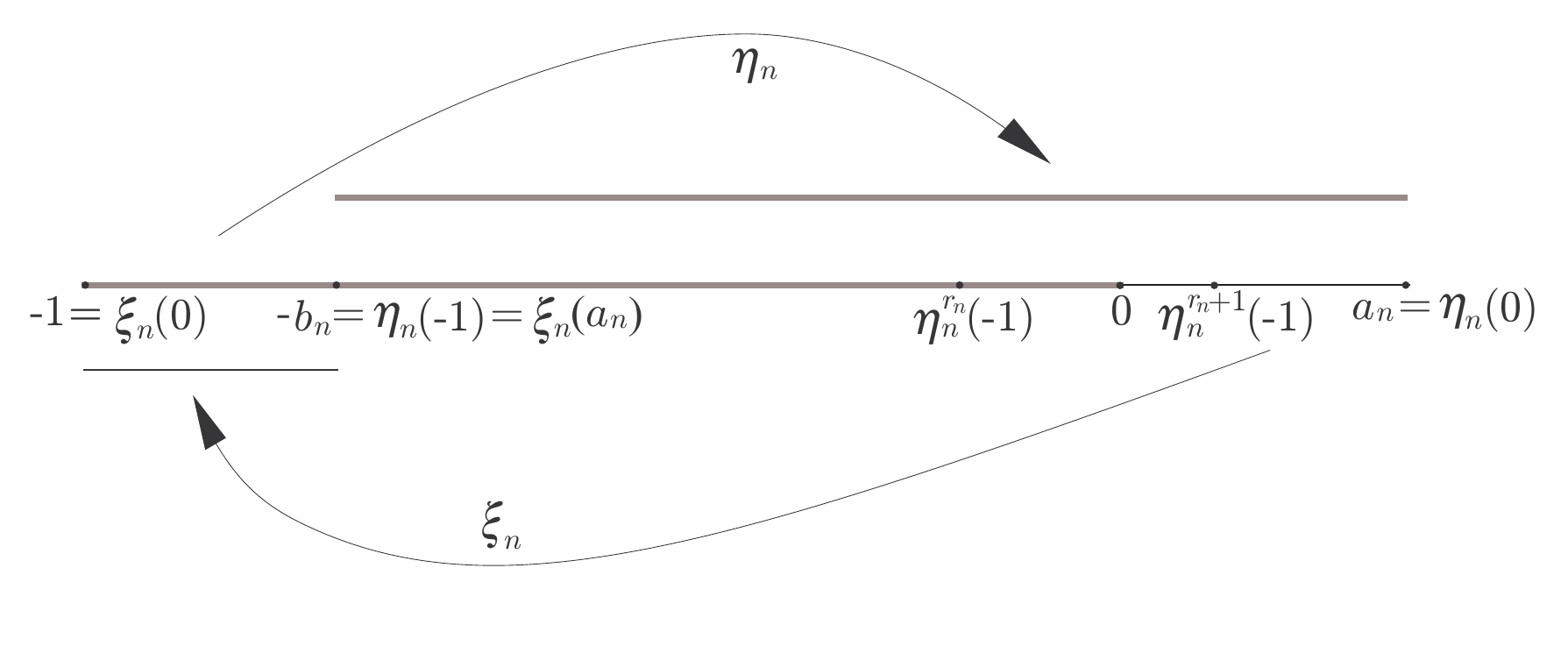}}
\caption{\label{pair}The dynamics of the pair $(\eta_n,\xi_n)$.
}
\end{figure}

Using the convention $1/\infty=0$, we recover $\rho(f)$ as the finite or infinite continued fraction
$$\rho(f)=[r_0,r_1,r_2,\ldots].$$
Note that one advantage of defining the continued fraction via the dynamics of $f$ as above is that we obtain a canonical expansion for all rational rotation numbers
 $\rho(f)$.

\subsection*{The invariant family of M{\"o}bius transformations.}
Fix a value of $c\neq 1$, and define two families of  M{\"o}bius transofrmations
$$F_{a,v,c}(z)=\frac{a+cz}{1-vz},\text{ and }G_{a,v,c}(z)=\frac{a(z-c)}{ac+z(1+v-c)}.$$
Set
$$\zeta_{a,v,c}\equiv(F_{a,v,c},G_{a,v,c}):[-1,a]\to [-1,a].$$
The parameter $c$ can be read off from
\begin{equation}
\label{eq1}
c^2=\frac{F_{a,v,c}'(0)G'_{a,v,c}(F_{a,v,c}(0))}{G_{a,v,c}'(0)F'_{a,v,c}(-1)}.
\end{equation}
Observe that 
\begin{equation}
\label{eq2}
-b\equiv F_{a,v,c}(-1)=G_{a,v,c}(a)=\frac{a-c}{1+v},\; F(0)=a,\;G(0)=-1.
\end{equation}

\noindent
We set the following additional constraints:
\begin{itemize}
\item[(I)] $c\geq a>0$;
\item[(II)] $b=(c-a)/(v+1)\in [0,1)$.
\end{itemize}

\noindent
Together with equalities (\ref{eq2}), the above conditions imply (see Lemma 4. in \cite{KK}):
\begin{prop}
\label{homeo}
The maps 
$$F_{a,v,c}:[-1,0]\to [-b, a]\text{ and }G_{a,v,c}:[0,a]\to [-1,-b]$$
are orientation preserving homeomorphisms. Further,
$$F'_{a,v,c}(z)>0\text{ for }z\in[-1,0],\text{ and }G'_{a,v,c}(z)>0\text{ when }z\in[0,a].$$
\end{prop}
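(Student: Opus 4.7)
The plan is to prove Proposition~\ref{homeo} by direct computation. Equalities \eref{eq2} already identify the values of $F_{a,v,c}$ and $G_{a,v,c}$ at the endpoints of $[-1,0]$ and $[0,a]$, so it suffices to verify that $F'_{a,v,c}>0$ on $[-1,0]$ and $G'_{a,v,c}>0$ on $[0,a]$; strict monotonicity then automatically promotes the two maps to orientation-preserving homeomorphisms onto the claimed images.

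The first step is a routine application of the quotient rule, which yields
$$F'_{a,v,c}(z)=\frac{c+av}{(1-vz)^{2}},\qquad G'_{a,v,c}(z)=\frac{ac\,(a+1+v-c)}{\bigl(ac+z(1+v-c)\bigr)^{2}}.$$
No algebraic miracle is needed; the identities will be verified by inspection.

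The second step is to read off the signs of the numerators and denominators from constraints (I) and (II). From $b=(c-a)/(v+1)\in[0,1)$ together with $c\geq a>0$ one extracts $v+1>0$ and $a+1+v-c>0$; the latter is precisely $b<1$ rearranged, and combined with $ac>0$ it shows that the numerator of $G'$ is strictly positive. For the denominator of $G'$, linearity of $ac+z(1+v-c)$ in $z$, combined with the positive values $ac$ at $z=0$ and $a(1+v)$ at $z=a$, yields strict positivity throughout $[0,a]$. For $F'$, linearity of $1-vz$ with positive values $1$ at $z=0$ and $1+v$ at $z=-1$ shows the denominator is nonzero on $[-1,0]$; the numerator $c+av$ is strictly positive since if $v\geq 0$ this is immediate, while if $-1<v<0$ one has $av>-a$, so $c+av>c-a\geq 0$, with the borderline case $c=a$ handled by rewriting $c+av=c(1+v)>0$.

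The only mildly subtle point is this last sign analysis for $c+av$ when $v$ is negative; outside it the argument is mechanical, and once $F'>0$ and $G'>0$ are established the claimed images follow immediately by reading the endpoint values from \eref{eq2}.
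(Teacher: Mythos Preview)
Your proof is correct; the paper itself gives no argument and simply refers to Lemma~4 in \cite{KK}, so there is nothing to compare against. One minor caveat: the deduction that $v+1>0$ follows from $b=(c-a)/(v+1)\in[0,1)$ and $c\ge a>0$ actually uses $c>a$ strictly (when $c=a$ one has $b=0$ regardless of the sign of $v+1$, and for $v<-1$ the map $F$ would acquire a pole at $1/v\in(-1,0)$). This boundary case is implicitly covered by the domain $\cD_c$ introduced immediately after the proposition, where $a+v>c-1$ forces $v+1>c-a\ge 0$, so the imprecision lies in the stated hypotheses rather than in your reasoning.
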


Denote
$$\pi:\RR\to \RR/\ZZ\text{ the projection }x\mapsto x \mod \ZZ,$$
and set 
%$$s_{a}(z)=(z+1)/(a+1),\text{ so that }s_a:[-1,a]\mapsto[0,1].$$
$$z\equiv -\frac{a}{c}= (F_{a,v,c})^{-1}(0)\in[-1,0].$$
Identifying the circle $\RR/\ZZ$ with $[-1,0]$ via the projection $\pi$, we can
view the pair of maps
$$\tau_{a,v,c}\equiv (F_{a,v,c}|_{[-1,z]},G_{a,v,c}\circ F_{a,v,c}|_{[z,0]}):[-1,0]\mapsto [-1,0]$$
as a circle homeomorphism
\begin{equation}\label{circle map}
f_{a,v,c}(z)=\pi\circ\tau_{a,v,c}\circ \pi^{-1}.
\end{equation}
We will denote its rotation number 
$$\rho(f_{a,v,c})\equiv \rho(\zeta_{a,v,c})=[r_0,r_1,\ldots].$$
In the case when $r_0\neq \infty$, we define the pre-renormalization 
$$p\cR\zeta_{a,v,c}\equiv (F_{a,v,c}^{r_0}\circ G_{a,v,c}|_{[0,a]},F_{a,v,c}|_{[F_{a,v,c}^{r_0}\circ G_{a,v,c}(0),0]}).$$

The following invariance property of the family $(F_{a,v,c},G_{a,v,c})$ is of a key importance:
\begin{prop}
\label{invariance}
Suppose $\rho (\zeta_{a,v,c})\neq 0$. Then the
 renormalization of a pair $\zeta_{a,v,c}=(F_{a,v,c}(z),G_{a,v,c}(z))$ is of the form $\zeta_{a',v',1/c}$ for some $a'$, $v'$.
\end{prop}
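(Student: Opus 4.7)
The plan is to characterize the Möbius family $\{\zeta_{a,v,c}\}$ by three defining structural conditions, then verify these conditions are preserved by renormalization, with $c\mapsto 1/c$ being the only parameter change.

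First, a direct computation shows that the Möbius pair $\zeta_{a,v,c}$ is uniquely determined among all pairs of Möbius transformations by the boundary normalizations (\ref{eq2}) together with the break-ratio constraint (\ref{eq1}): these impose exactly the right number of equations to pin down a 3-parameter family inside the 6-dimensional space of pairs of Möbius maps. Thus, to identify $\cR\zeta_{a,v,c}$ with some $\zeta_{a',v',c'}$, it suffices to establish that the renormalization is a Möbius pair satisfying the analogous normalizations and break identity with $c'=1/c$.

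Second, since the Möbius group is closed under composition and under affine conjugation, both the pre-renormalization $p\cR\zeta_{a,v,c}$ and its rescaling $\cR\zeta_{a,v,c}$ are pairs of Möbius transformations. The choice of $r_0$ from the rotation-number itinerary, together with the orientation-reversing rescaling $\al(x)=-x/\xi(0)$, automatically yields a pair defined on $[-1,0]\cup[0,a']$ that inherits the normalizations (\ref{eq2}) from those of the original pair and from the definitions of $r_0$ and $\al$. This step is purely formal bookkeeping.

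The key step---and the main obstacle---is the identity $c'=1/c$. Intrinsically, $c^2$ is the ratio of one-sided derivatives of the underlying circle homeomorphism $f_{a,v,c}$ at its break point, hence an invariant under smooth, orientation-preserving conjugacy. Under renormalization, the first-return construction keeps the break point fixed, so its break ratio coincides with the original $c^2$ \emph{except} for the orientation reversal introduced by $\al$, which swaps left and right derivatives and thereby inverts $c^2$ to $(1/c)^2$. To turn this intuition into a rigorous check, one applies (\ref{eq1}) to $\cR\zeta_{a,v,c}$, expands the derivatives of $F_{a,v,c}^{r_0}\circ G_{a,v,c}$ by the chain rule, and verifies that the affine factors from $\al$ cancel in the ratio while the sign reversal exchanges numerator and denominator. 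Combined with the characterization in the first paragraph, this identifies $\cR\zeta_{a,v,c}$ with $\zeta_{a',v',1/c}$; the explicit values of $a'$ and $v'$ can then be read off from the normalizations derived in the second paragraph.
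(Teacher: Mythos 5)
The paper states Proposition \ref{invariance} without proof (it is a known result from \cite{KK, KV2}), so there is no in-paper argument to compare against; I evaluate your proposal on its own terms. Your overall strategy --- characterize $\{\zeta_{a,v,c}\}$ by (\ref{eq1}), (\ref{eq2}), check that renormalization preserves this characterization --- is a reasonable and arguably cleaner route than the explicit matrix-power computation one would otherwise do for $F^{r_0}_{a,v,c}$.

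However, the justification of the crucial step $c\mapsto 1/c$ is wrong as written. You assert that ``$c^2$ is the ratio of one-sided derivatives of the underlying circle homeomorphism $f_{a,v,c}$ at its break point.'' This is false: the map $f_{a,v,c}$ built via (\ref{circle map}) has \emph{two} break points, one at $\pi(-a/c)$ with one-sided derivative ratio $1/G'_{a,v,c}(0)=ac/(a+1+v-c)$, and one at the glued point $\pi(0)=\pi(-1)$ with ratio $G'_{a,v,c}(a)F'_{a,v,c}(0)/F'_{a,v,c}(-1)=c(a+1+v-c)/a$. Neither ratio equals $c^2$; only their product does. Consequently the argument ``$c^2$ is invariant under smooth conjugacy, and the orientation reversal of $\alpha$ inverts it'' is not applied to a quantity that actually equals $c^2$, and the smooth-conjugacy premise (first-return keeps a single break point fixed with unchanged ratio) does not hold for $f_{a,v,c}$. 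The quantity $c^2$ in (\ref{eq1}) belongs intrinsically to the \emph{pair} (it is $D(G\circ F)(0)/D(F\circ G)(0)$), and its inversion under $\cR$ comes from a telescoping chain-rule cancellation: if one writes $(c')^2=D(\xi'\circ\eta')(0)/D(\eta'\circ\xi')(0)$ for the rescaled pair $(\eta',\xi')=\alpha\circ(F^{r_0}\circ G, F)\circ\alpha^{-1}$, the affine factors from $\alpha$ cancel and $\xi'\circ\eta'=\alpha\circ F^{r_0+1}\circ G\circ\alpha^{-1}$, $\eta'\circ\xi'=\alpha\circ F^{r_0}\circ G\circ F\circ\alpha^{-1}$; expanding along the orbit $-1, F(-1),\ldots, F^{r_0+1}(-1)$, all the intermediate factors $DF$ cancel, leaving exactly $\frac{DG(0)\,DF(-1)}{DF(0)\,DG(a)}=1/c^2$. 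You do gesture at this computation, but the phrase ``the sign reversal exchanges numerator and denominator'' does not describe what happens: the orientation of $\alpha$ drops out entirely; the inversion comes from the swap of the roles of the two components in the pre-renormalized pair. So the rigorous version of the step is fine, but the intuitive story leading to it is simply not correct, and the proof as written cannot stand on it.

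A secondary point: in the characterization of paragraph one, conditions (\ref{eq2}) together with (\ref{eq1}) pin $G$ down only up to a sign choice (the break-ratio identity gives $(1+ua)^2=(1+v)^2/c^2$, hence two branches for $u$); one branch is $G_{a,v,c}$ and the other is an orientation-reversing map, so one must add the monotonicity constraint of Proposition~\ref{homeo} to complete the identification. This is fixable, but should be said.
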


Further, the following holds (see \cite{KV,KV2}):
\begin{thm}
\label{convergence}
Let $f$ be a circle mapping with a break singularity of class $C^{2+\eps}$, with break size $c$. Let $\zeta_n=(\eta_n,\xi_n)=\cR^n(f)$,
and $a_n=\eta_n(0)$, $-b_n=\xi_n(0)$ as above. Set $c_n=c$ for $n$ even and $c_n=1/c$ for $n$ odd, and
$v_n=(c_n-a_n-b_n)/b_n$.
Then there exist constants $C>0$ and $0<\lambda<1$ such that
$$||\eta_n-F_{a_n,v_n,c_n}||_{C^2([-1,0])}\leq C\lambda^n\text{ and }
||\xi_n-G_{a_n,v_n,c_n}||_{C^2([0,a_n])}\leq C\lambda^n/a_n\text.$$
\end{thm}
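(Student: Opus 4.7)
The plan is to exploit the well-known characterization of Möbius transformations as those with vanishing Schwarzian derivative, and to show via a cocycle computation that the Schwarzian of the pre-renormalization $p\cR^n f$ tends to zero exponentially fast on the relevant domain. After rescaling, this will force $\eta_n,\xi_n$ to be exponentially $C^2$-close to some Möbius pair; the constraints in Proposition~\ref{invariance} then pin down that pair as $(F_{a_n,v_n,c_n},G_{a_n,v_n,c_n})$.

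The first step is to establish the real a priori bounds that underlie everything: bounded geometry of the dynamical partition for circle maps with a break of class $C^{2+\eps}$, and exponential decay of the scales $|\gamma_n([-1,0])|$ and $a_n,b_n$ at some rate $\mu^n$ with $\mu<1$. These bounds, proved in \cite{KV}, give that the intervals $J_n^{(j)}$ of the partition at level $n$ (whose compositions of iterates of $f$ comprise $p\cR^n f$) are uniformly shrinking with controlled shape, and that the derivatives $|Df^k|$ restricted to any such interval have bounded multiplicative distortion via a Koebe-type argument adapted to the break singularity (which is possible because the critical orbits of the break never return to a definite neighborhood of $0$ at the relevant scale).

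Next I would apply the cocycle formula for the Schwarzian derivative,
\begin{equation*}
S(g_k\circ\cdots\circ g_1)=\sum_{i=1}^{k}\bigl((Sg_i)\circ g_{i-1}\circ\cdots\circ g_1\bigr)\cdot\bigl(D(g_{i-1}\circ\cdots\circ g_1)\bigr)^2,
\end{equation*}
to the decomposition of each branch of $p\cR^n f$ into iterates of $f$. Since $f\in C^{2+\eps}$ away from $0$, the Schwarzian $Sf$ is bounded on compact subsets disjoint from the break; the $C^{2+\eps}$ regularity furthermore makes $Sf$ H\"older continuous, which together with bounded geometry allows one to replace $(Sf)\circ\varphi$ by its value at a base point up to an error of order $|\text{diam}(\text{image})|^{\eps}$. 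Summing these contributions along the orbit, using that the squared derivatives are just the squares of lengths ratios of partition intervals and that these intervals pack $[-1,0]$ with total measure $1$, one obtains $|S(p\cR^n f)|=O(\mu^{n\eps})$ pointwise on the domain of definition.

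Finally, the conclusion $S(p\cR^n f)=O(\lambda^n)$ passes to smallness of $p\cR^n f$ from its Möbius approximant: integrating twice with matched boundary data and using the fact that the unique element of the Möbius family $\zeta_{a,v,c}$ realizing the boundary values $F(0)=a$, $G(0)=-1$, $F(-1)=G(a)=-b$ at the given $c$ is exactly $(F_{a,v,c},G_{a,v,c})$ by \eqref{eq2}. Rescaling back to $\cR^n f=(\eta_n,\xi_n)$ by $\alpha_{n+1}^{-1}\circ\cdots$, the $C^2$ error on $\eta_n$ stays of order $\lambda^n$, while on $\xi_n$ the change of variables multiplies the second derivative by $a_n^{-1}$, producing the stated bound $C\lambda^n/a_n$. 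The main technical obstacle is the H\"older estimate for the Schwarzian cocycle at the break itself: one must verify that the orbit of the endpoint under the reference iterates stays a definite distance from $0$ except at the designated visits, and track the distortion contributed by those visits; this is handled by the break-distortion lemma of \cite{KV,KV2} and is what ultimately constrains the exponent $\lambda$ in terms of $\mu$ and $\eps$.
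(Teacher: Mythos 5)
The paper does not actually prove \thmref{convergence}: it is cited from \cite{KV,KV2} immediately before the statement. So there is no internal proof to compare against; what follows is an assessment of whether your argument would work.

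The central step of your plan does not run at the stated regularity. You propose to show that the Schwarzian derivative of the pre-renormalization tends to zero and then to integrate back. But a map of class $C^{2+\eps}$ does not have a Schwarzian derivative at all --- $Sf$ involves $f'''$, which need not exist; the assertion that "the $C^{2+\eps}$ regularity makes $Sf$ H\"older continuous" is incorrect (H\"older $Sf$ would require $f\in C^{3+\eps}$, and merely bounded $Sf$ would require $f\in C^{3}$). Consequently the cocycle formula $S(g_k\circ\cdots\circ g_1)=\sum (Sg_i)\circ(\cdots)\cdot D(\cdots)^2$ cannot even be written down. Your argument would prove a weaker theorem, under $C^3$ (or $C^{3+\eps}$) regularity; and in that regime the exponential decay of $S(\cR^n f)$ in fact already follows from boundedness of $Sf$ together with the a priori bound $\sum_i|f^i(J)|^2\leq(\max_i|f^i(J)|)\sum_i|f^i(J)|\lesssim\mu^n$, so the H\"older estimate you invoke is a red herring in any case. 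To work at $C^{2+\eps}$ one must replace the pointwise Schwarzian by a finite-difference substitute: e.g.\ the cross-ratio distortion $\log\bigl(f'(x)f'(y)(x-y)^2/(f(x)-f(y))^2\bigr)$, which vanishes identically for M\"obius maps, is well-defined for $C^1$ maps, admits a cocycle decomposition under composition, and is controlled at order $|x-y|^{2+\eps}$ by the $\eps$-H\"older modulus of $f''$. This (or an equivalent direct M\"obius interpolation of $f$ on each partition interval with $C^{2+\eps}$ error control) is what the known proofs use, and it is what delivers the $C^2$ rather than merely $C^1$ closeness.

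A secondary point: even granting that $\cR^n f$ becomes exponentially close to \emph{some} M\"obius pair, you need a reason why that pair belongs to the two-parameter family $(F_{a,v,c},G_{a,v,c})$. Matching the boundary values $F(0)=a$, $G(0)=-1$, $F(-1)=G(a)=-b$ and the break ratio \eqref{eq1} leaves a one-parameter family of orientation-preserving M\"obius pairs, not a single one; it is the asymptotic commutation relation \eqref{commute} (which you never invoke) that cuts this down and identifies the limit with $(F_{a_n,v_n,c_n},G_{a_n,v_n,c_n})$. Your "integrate twice with matched boundary data" glosses over this, and also over the fact that reconstructing $f$ from $Sf$ is a third-order, not second-order, problem, so a full set of normalizations (values, not just endpoints) is needed to get $C^2$-smallness of the difference.
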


At this point it is instructive to draw parallels with the theory of renormalization of critical circle maps 
(see \cite{Ya1,Ya2,Ya3} and references therein).
The   $\cR$-invariant two-dimensional parameter space $(a,v)$ of pairs of M{\"o}bius maps is an analogue of the Epstein class $\cE$ of 
critical circle maps. Similarly to \propref{convergence}, renormalizations of smooth critical circle maps converge to $\cE$ geometrically fast. The maps in $\cE$ 
are analytic and possess rigid global structure, yet the Epstein class does not embed into a  finite-dimensional space. In fact, as shown in \cite{Ya2} by the second
author, suitably defined conformal conjugacy classes of maps in $\cE$ form a Banach manifold. 
This has required developing an extensive analytic machinery to tackle renormalization convergence and hyperbolicity in $\cE$ (see \cite{Ya3}, which is the
culmination of this work, and references therein). In contrast, the finite dimensionality of the space $(a,v)$ 
allows for a direct approach to proving hyperbolicity of $\cR$ restricted to this invariant space.

Note that since maps with a break singularity are only $C^0$ at the origin, it does not make sense to speak of renormalization of {\it commuting} pairs,
the way one does for critical circle maps. We note, however, that maps $F_{a,v,c}$ and $G_{a,v,c}$ satisfy the following commutation condition:
\begin {equation}
\label{commute}
G(F(z))=F(G(c^2 z)).
\end{equation}
While one can readily check this algebraically, \thmref{convergence} leads to the same conclusion. Indeed, compare the value of the iterate
$f^{q_n+q_{n+1}}$ to the left and to the right of zero. Since
$f'(0-)/f'(0+)=c^2$, we have (in a self-explanatory notation),
$$f^{q_n+{q_{n+1}}}(0-)=f^{q_n+{q_{n+1}}}(c^2\cdot 0+).$$
Since the compositions $\eta_n\circ \xi_n$ and $\xi_n\circ \eta_n$ are both obtained by linearly rescaling the above iterate (on different sides of $0$), the equality (\ref{commute}) holds for limits of renormalizations.

%%In view of this, we will fix a parameter $c$, and denote $\cR\equiv\cR_{c}.$
%%The composition $$\cT_c\equiv \cR_{1/c}\circ \cR_c$$ can thus be viewed as a transformation of pairs $(a,v)\mapsto (a'',v'').$
\subsection*{Renormalization on the space of M{\"o}bius pairs.}
Let us denote
 $$\cD_c=\{(a,v)\;|\;0<a\leq c\text{ and }a+v>c-1\}.$$
The set $\cO^n_c\subset \cD_c$ consists of pairs $(a,v)$ for which the pair $\zeta_{a,v,c}=(F_{a,v,c},G_{a,v,c})$ is $n$-times renormalizable.
The infinite intersection
$$\cO^\infty_c=\cap_{n\in\NN} \cO^n_c$$
is the set of infinitely renormalizable pairs. It can be equivalently characterized as the set of pairs $\zeta_{a,v,c}$ with
an irrational rotation number. 
The set of renormalizable pairs $\cO^1_c$  is naturally stratified into a collection of subsets 
$$\Pi_{k,c}=\{(a,v)\text{ such that }\zeta_{a,v,c}\text{ has height }r=k\in\NN\}.$$
In particular, for $(a,v)\in\Pi_{k,c}$ the rotation number $\rho(\zeta_{a,v,c})$ can be expanded into a continued fraction
of the form $[k,\ldots]$.

We  define the renormalization operator 
$$\cR_c:\cO^1_c\to \cD_{1/c},$$
as $(a,v)\to(a',v')$ where $\zeta_{a',v',1/c}=\cR_c(\zeta_{a,v,c})$.
The operator $\cR_c$ 
is analytic on the interior of each of the sets $\Pi_{k,c}$. It is also convenient for us to define an operator 
$$\cT_c\equiv \cR_{1/c}\circ \cR_c,$$
so that 
$$\cT_c:\cO^2_c\to \cD_c.$$
We will use notations $A_{k,c}(a,v)$ and $V_{k,c}(a,v)$ for
\begin{equation}
\label{eq-av}
(A_{k,c}(a,v),V_{k,c}(a,v))=\cT^k_c(a,v).
\end{equation}
As in (\ref{prerenorm}), we will define the pre-renormalizations $p\cR_c(a,v)$ and $p\cT_c(a,v)$ as the non-rescaled
iterates of the pair $\zeta_{a,v,c}$ corresponding to the appropriate renormalization.

As shown in \cite{KK} (Lemma 5),
\begin{prop}
\label{nonrenorm}
If $c<1$ then the set of non-renormalizable parameters $\cD_c\setminus \cO^1_c$ is empty. If $c>1$ then
$$\cD_c\setminus \cO^1_c=\left\{(a,v)|\; \max\{ 0,c-v-1\}<a\leq \frac{(c-1)^2}{4v},\;v>\frac{c-1}{2}\right\}.$$
\end{prop}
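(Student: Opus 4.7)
The plan is to reduce non-renormalizability to the existence of a fixed point of $F=F_{a,v,c}$ on $[-1,0]$ and then to solve a quadratic inequality.

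First I would show that $\zeta_{a,v,c}$ is non-renormalizable if and only if $F$ has a fixed point in $[-1,0]$. Since $G_{a,v,c}(0)=-1$, renormalizability asks the orbit $\{F^n(-1)\}_{n\ge 0}$ to exit $[-1,0]$ through $0$ in finitely many steps. By \propref{homeo}, $F:[-1,0]\to[-b,a]$ is a strictly increasing homeomorphism, so the orbit is monotone while it stays in $[-1,0]$. If $F$ has no fixed point there, then $F(z)-z$ has constant sign, and $F(-1)=-b>-1$ (from $b<1$) pins this sign as positive, forcing the orbit to strictly increase and escape through $0$. Conversely, a fixed point $z_\star\in[-1,0]$ traps the orbit, since monotonicity gives $F(-1)\le F(z_\star)=z_\star$ and hence confinement to $[-1,z_\star]$.

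Clearing denominators in $F(z)=z$ yields the quadratic
\[
q(z):=vz^2+(c-1)z+a,
\]
where $1-vz>0$ on $[-1,0]$ (automatic from $v>-1$, which follows from $a+v>c-1$ together with $a\le c$). The boundary values $q(0)=a>0$ and $q(-1)=a+v+1-c>0$ are positive by the defining inequalities of $\cD_c$, so a root of $q$ in $[-1,0]$ can occur only when $q$ dips strictly below zero in the interior. This forces (i) $v>0$, (ii) the vertex $z^{*}=(1-c)/(2v)$ to lie in $(-1,0)$, and (iii) the minimum value $a-(c-1)^2/(4v)$ to be nonpositive. When $c<1$, condition (ii) fails: for $v>0$ the vertex is $z^{*}>0$ and $q$ is monotone decreasing on $[-1,0]$ to $q(0)=a>0$, while for $v\le 0$ the parabola is concave down or linear and hence bounded below on $[-1,0]$ by $\min(q(-1),q(0))>0$. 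In both subcases $q>0$ on $[-1,0]$, so $\cD_c\setminus\cO^1_c=\emptyset$. When $c>1$, conditions (i) and $z^{*}>-1$ combine to give $v>(c-1)/2$; condition (iii) becomes $a\le(c-1)^2/(4v)$; and the $\cD_c$ constraints $a>0$ and $a>c-v-1$ combine to give $a>\max\{0,c-v-1\}$, yielding the claimed region.

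I expect the only delicate point to be the first step, specifically ensuring that when $F$ has two fixed points $z_-<z_+$ in $[-1,0]$ the orbit of $-1$ does not leapfrog past $z_+$ and escape. Monotonicity handles this automatically: $F(-1)\le F(z_+)=z_+$ forces the iterates to remain in $[-1,z_+]$, and since $q>0$ on $[-1,z_-)$ the orbit monotonically converges to $z_-$. The remaining work is routine quadratic analysis and bookkeeping of the $\cD_c$-constraints.
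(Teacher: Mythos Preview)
The paper does not actually prove this proposition; it simply cites Lemma~5 of \cite{KK}. Your argument is correct and is the natural one: you reduce non-renormalizability of $\zeta_{a,v,c}$ to the existence of a fixed point of $F_{a,v,c}$ in $[-1,0]$ via the monotonicity of the orbit $\{F^n(-1)\}$, and then analyze the quadratic $q(z)=vz^2+(c-1)z+a$ using the sign of its endpoint values $q(0)=a>0$, $q(-1)=a+v+1-c>0$ and the location of its vertex.

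One small expository slip: you write that a root in $[-1,0]$ forces $q$ to ``dip strictly below zero in the interior'', but a double root (tangency) also counts. Your condition (iii), stated as $a-(c-1)^2/(4v)\le 0$, already handles this correctly, so the mathematics is fine; only the phrasing is slightly off. Everything else---the case split on the sign of $v$, the verification that $1-vz>0$ so that clearing denominators is legitimate, and the bookkeeping of the $\cD_c$ constraints---is in order.
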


\noindent
%%%To fox the ideas, we will only work with $c>1$ from now on.

\noindent
As an example, in \figref{range} we picture some of the above described sets for $c=3$. The set 
$$\cD_3=\{(a,v)\subset \RR^2|\;0<a\leq 3,\;v>2-a\}$$
is pictured together with the first few $\Pi_{k,3}$.
The  complement of the region $\cO^1_3$ is also indicated. The curved portion of  the boundary of $\cO^1_3$ consists of parameters for which $F$ has a fixed point
$w\in(-1,0)$ with $F'(w)=1$; its equation is $v=1/a$ for $a\in (0,1)$.

\begin{figure}[h]
\centerline{\includegraphics[width=0.9\textwidth]{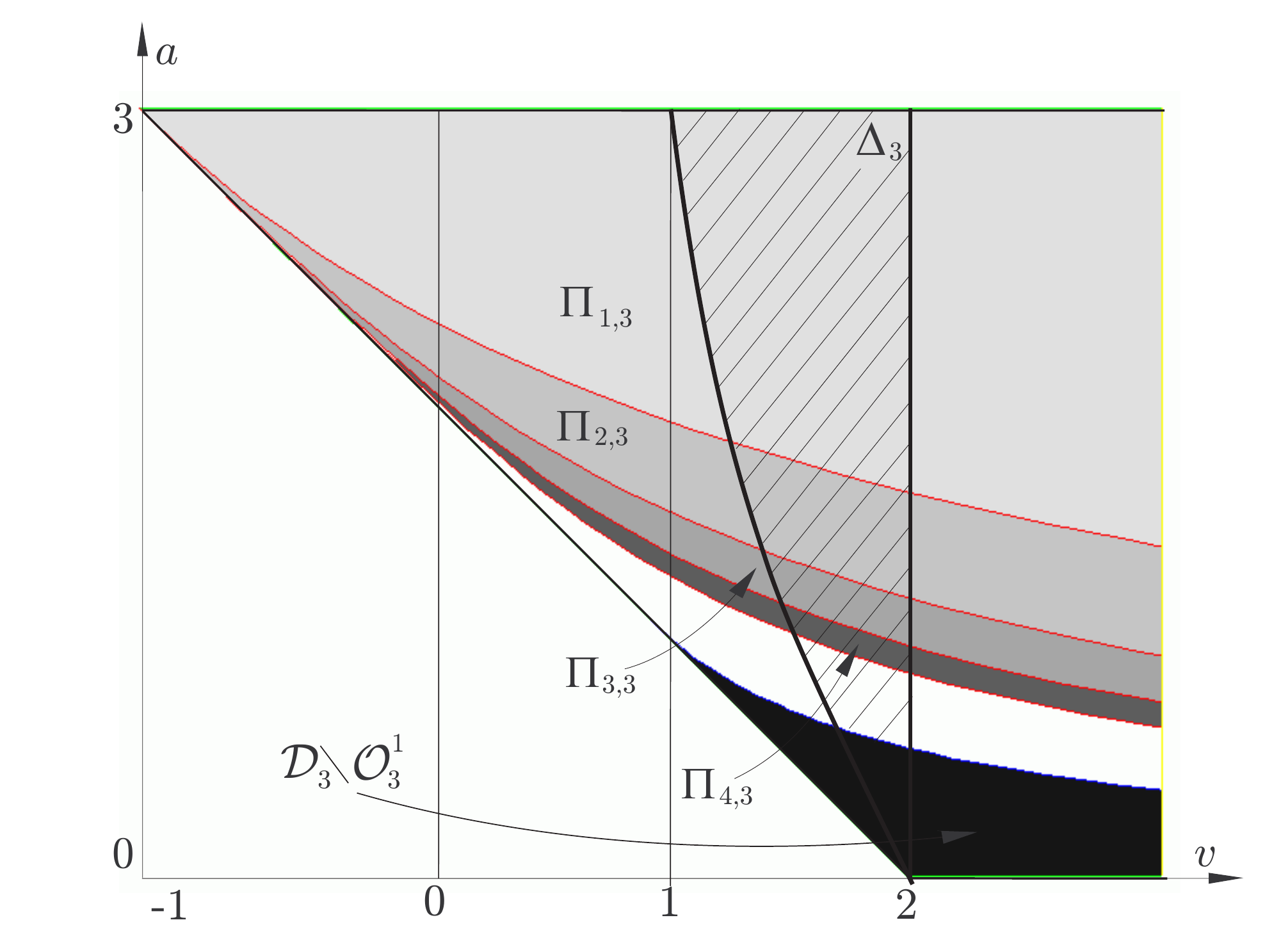}}
\caption{\label{range}The set $\cD_3$ and $\Pi_{k,3}$ for $k=1,2,3,4$.
}
\end{figure}

As shown in \cite{KK}, orbits of renormalization eventually fall into a compact subset of $\cD_c$. 
Namely, for $c>1$ we define
$$\Delta_c=\{ (a,v)\in\cD_c|\;0\leq v\leq c-1\},$$
and for $c<1$ we set
$$\Delta_c=\{ (a,v)\in\cD_c|\;c-1\leq v\leq 0\}.$$
Then we have:
\begin{prop}
\label{invariant domain}
The image
$$\cT_c(\Delta_c\cap \cO^2_c)\subset \Delta_c.$$
Further, for every $(a,v)\in \cO^\infty_c$ there exists $k\geq 0$ such that
$$\cT_c^k(a,v)\in \Delta_c.$$
\end{prop}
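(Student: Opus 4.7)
The plan is to establish both claims by a direct analysis of the action of $\cR_c$ using the explicit Möbius structure of the family. For any $(a,v) \in \Pi_{k,c}$, the pre-renormalization is the pair $(F_{a,v,c}^{k}\circ G_{a,v,c}, F_{a,v,c})$ on appropriate intervals, and $\cR_c$ rescales this by $\alpha(x) = -x/F_{a,v,c}^{k}(G_{a,v,c}(0))$. By \propref{invariance} the output is again of the form $\zeta_{a',v',1/c}$. Since $F_{a,v,c}$ is Möbius, its iterates $F_{a,v,c}^{k}$ admit closed-form expressions obtained from powers of the associated $2\times 2$ matrix, and combined with (\ref{eq2}) this yields explicit rational formulas for $(a',v')$ as functions of $(a,v,c,k)$. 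This is the computational machinery I would use throughout.

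For the invariance claim $\cT_c(\Delta_c \cap \cO^2_c) \subset \Delta_c$, I would first prove the one-step statement $\cR_c(\Delta_c \cap \Pi_{k,c}) \subset \Delta_{1/c}$ for every admissible $k$ and then compose. The condition defining $\Delta_c$ can be translated into geometric bounds on $b$: for $c > 1$, $v \in [0,c-1]$ is equivalent to $1-a/c \leq b < 1$. Using $-b' = F_{a',v',1/c}(-1)$ together with the rescaling, $b'$ can be written explicitly in terms of the iterated pair and $\alpha$, and the required inequalities reduce to checking the behavior on $\partial \Delta_c$. It suffices to verify them on the two boundary arcs $v = 0$ and $v = c-1$; a short algebraic computation shows the image lies in $\Delta_{1/c}$. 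Composing $\cR_{1/c}$ with $\cR_c$ then gives the claim for $\cT_c$.

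For the second part, suppose $(a,v) \in \cO^\infty_c \setminus \Delta_c$. I would establish a monotonicity property: on $\cD_c \setminus \Delta_c$, the renormalization $\cR_c$ strictly decreases the ``distance'' of $v$ from the target interval, with a quantitative amount of progress depending on the height. This can be read off from the Möbius formulas by inspecting the sign of $v' - v^{\mathrm{target}}$ at $v$ outside $\Delta_c$: the key point is that in the boundary regime where $b$ is too small (resp.\ too large), a single application of $\cR$ renormalizes the short interval to unit length, automatically enlarging (resp.\ shrinking) the corresponding quantity in the image. Since heights $r_n \geq 1$, progress accumulates and the orbit enters $\Delta_c$ after finitely many steps.

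The main obstacle is controlling the formulas uniformly in the height $k$, as the expression for $F_{a,v,c}^{k}$ becomes intricate for large $k$. I would circumvent this by diagonalizing $F_{a,v,c}$ via its two fixed points, which under constraints (I)--(II) lie in $\RR \setminus [-1,0]$ and depend rationally on $(a,v,c)$; in these coordinates $F^k$ is simply multiplication by the $k$-th power of the multiplier. This reduces all relevant inequalities to elementary monotone estimates in $k$, making both the boundary verification for $\Delta_c$ and the quantitative progress estimate outside $\Delta_c$ tractable.
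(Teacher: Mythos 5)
The paper does not actually prove this proposition; it is stated as a citation to the reference [KK] (Khanin--Khmelev). So there is no "paper's proof" to match your approach against, and what you have written is a fresh reconstruction attempt. With that said, the attempt as it stands has genuine gaps.

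The most concrete error is in your proposed technical workaround. You claim that the two fixed points of $F_{a,v,c}$ lie in $\RR\setminus[-1,0]$ under constraints (I)--(II), and you plan to diagonalize over these real fixed points. This is false: the fixed points satisfy $vz^2+(c-1)z+a=0$, with discriminant $(c-1)^2-4av$, and this discriminant is negative on a large portion of $\Delta_c$ (for instance $a=c$, $v=c-1$ with $c>1$ gives $-(c-1)(5c-1)<0$). Inside the renormalizable region $\cO^1_c$ the fixed points of $F_{a,v,c}$ are often complex (elliptic Möbius map), which is exactly why $F$ can have arbitrarily high return height. Your diagonalization must therefore involve a complex (unimodular) multiplier, and all the "elementary monotone estimates in $k$" you promise become trigonometric in character; nothing in your outline addresses this, and the claimed simplification evaporates.

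Two further steps are asserted but not justified. First, you say "it suffices to verify on the two boundary arcs $v=0$ and $v=c-1$"; this needs a topological or monotonicity argument — injectivity and continuity of $\cR_c$ alone do not imply that interior points stay inside $\Delta_{1/c}$ if the boundary does, unless you also control the image of the remaining boundary pieces and invoke something like a Jordan-curve / degree argument, which you never set up. Second, for the eventual-absorption claim you assert that "progress accumulates," but you give no uniform lower bound on the per-step progress; without one, a Zeno-type orbit creeping towards $\partial\Delta_c$ without ever entering cannot be excluded. In short, the overall plan (explicit Möbius computation plus a boundary-checking and a drift argument) is a reasonable one to try, but the key estimates are all left as assertions and the one structural simplification you lean on is based on a false premise about the fixed points.
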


The following discovery of \cite{KT2} is going to be key for our study. Denote
$$\cI_c(a,v)=\left( \frac{c-1-v}{av},-\frac{v}{c}\right),\; \text{Jac}(I_c)=\frac{c-1-v}{a^2cv}>0\text{ for }(a,v)\in\cD_c.$$
It is easy to verify that
$$\cI_c:\cD_c\to \cD_{1/c}.$$
This map is an involution in the following sense:
$$\cI_{1/c}\circ \cI_c=\text{Id}.$$
Then the following is true:
\begin{duality} \cite{KT2}
The renormalization operator 
$$\cR_c:\cO^1_c\to\cD_{1/c}$$
is injective, and 
$$\cR_c^{-1}=\cI_{1/c}\circ \cR_c\circ \cI_{1/c},$$ where the left-hand side is defined.
Furthermore, set $(a',v')=\cI_c(a,v)$. Then the pairs
$\zeta_{a,v,c}$ and 
$\cR_{c}^{-1}\zeta_{a',v',1/c}$ have the same height.
\end{duality}

As a consequence, we have 
\begin{equation}
\label{t conj}
\cT_c^{-1}=\cI_{1/c}\circ \cT_{1/c}\circ \cI_c=(\cI_c)^{-1}\circ \cT_{1/c}\circ \cI_c.
\end{equation}

\section{Previous results and the statement of the main theorem}

Denote $\Sigma_\NN$ the set of bi-infinite sequences $(r_i)$, $r_i\in\NN$, $i\in\ZZ$, endowed with the distance
$$d((r_i),(t_i))=\sum_{n\in\ZZ}\left|\frac{1}{r_i}-\frac{1}{t_i}\right|\cdot 2^{-|n|} .$$
Set $$\sigma(r_i)_{i\in\ZZ}=(r_{i+1})_{i\in\ZZ}.$$
The following result was shown by the first author and Teplinski in \cite{KT2}:

\begin{thm}
\label{thm-1}
Let $c>1$. There exists a set $\cA_c\subset \Delta_c$ and $\lambda>1$ such that the following properties hold:
\begin{itemize}
\item for any pair of values $(a_i,v_i)\in \cO^\infty_c$,  $i=1,2$ with $\rho(\zeta_{a_1,v_1,c})=\rho(\zeta_{a_2,v_2,c})$
there exists $K_1>0$ such that
$$||\cT_c^n(a_1,v_1)-\cT_c^n(a_2,v_2)||<K_1\lambda^{-n} ;$$
\item for any $(a,v)\in\cO^\infty_c$ there exists $K_2>0$ such that
$$\dist_{\RR^2}(\cT_c^n(a,v),\cA_c)<K_2\lambda^{-n};$$
\item finally, there exists a homeomorphism $\iota:\Sigma_\NN\to \cA_c$ such that
$$\rho(\zeta_{\iota(r_i),c})=[r_0,r_1,\ldots]\text{ and }\iota\circ \cT_c\circ \iota^{-1}=\sigma^2.$$
\end{itemize}
\end{thm}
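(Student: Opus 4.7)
The plan is to build a Markov-type partition of $\Delta_c$ indexed by heights of renormalization, establish exponential contraction of $\cT_c$ along level sets of rotation number, and use the Duality Theorem to upgrade this to a full horseshoe. For a finite word $\mathbf{r}=(r_0,\ldots,r_{n-1})\in\NN^n$, let $\Pi_{\mathbf{r},c}\subset\Delta_c$ denote the set of pairs whose first $n$ renormalization heights are $r_0,\ldots,r_{n-1}$. These cylinders are analytically embedded, nested, and partition $\Delta_c\cap\cO^n_c$ up to boundaries, so that a pair lies in $\cO^\infty_c$ precisely when its height sequence is infinite. By \propref{invariant domain} every such pair lands in $\Delta_c$ after finitely many iterations, so one may restrict attention to $\Delta_c$ from the outset.

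The first and most delicate step is to show that each $\Pi_{\mathbf{r},c}$ shrinks exponentially in the ``stable'' direction along the rotation number foliation, with a ratio $\lambda^{-1}<1$ uniform in $\mathbf{r}$. Two pairs in $\Delta_c$ with the same irrational rotation number $[r_0,r_1,\ldots]$ lie in every cylinder $\Pi_{(r_0,\ldots,r_{n-1}),c}$, so bounding the stable diameter of the $n$th cylinder by $K_1\lambda^{-n}$ yields statement~(1). To extract the contraction I would compute the tangent action of $\cT_c$ on each $\Pi_{k,c}$ explicitly---since $F_{a,v,c}$ and $G_{a,v,c}$ are M{\"o}bius, the pre-renormalization $p\cT_c$ is a rational function of $(a,v)$---and combine a Denjoy-type nonlinearity bound for the iterate $F_{a,v,c}^{r_0}$ with an infinitesimal compactness argument on $\Delta_c$.

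The Duality Theorem, specifically~\eref{t conj}, then produces the complementary expansion direction: backward iteration of $\cT_c$ is real-analytically conjugate via $\cI_c$ to forward iteration of $\cT_{1/c}$, so the stable estimates for $\cT_{1/c}$ transport to exponential expansion of $\cT_c$ transverse to the rotation number foliation. Define $\cA_c$ to be the maximal $\cT_c$-invariant subset of $\Delta_c$, equivalently
$$\cA_c=\bigcap_{n\ge 0}\cT_c^n\bigl(\Delta_c\cap\cO^{2n}_c\bigr).$$
This is a totally disconnected invariant set whose points are characterized by bi-infinite height itineraries. Statement~(2) then follows by shadowing: any $(a,v)\in\cO^\infty_c$ determines a one-sided height sequence, and extending it arbitrarily to the past yields a point of $\cA_c$ whose stable leaf contains $(a,v)$, with stable contraction giving the $K_2\lambda^{-n}$ rate.

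Finally, the coding $\iota:\Sigma_\NN\to\cA_c$ sends $(r_i)_{i\in\ZZ}$ to the unique point of $\cA_c$ with that itinerary (existence from nonemptiness of cylinders, uniqueness and continuity from the stable/unstable contractions), and the conjugacy $\cT_c\circ\iota=\iota\circ\sigma^2$ is tautological since $\cT_c=\cR_{1/c}\circ\cR_c$ shifts the height sequence by two. The main obstacle is uniformity of the stable contraction over the unbounded alphabet $\NN$: one must rule out any degeneration as $r_n\to\infty$, which amounts to a Koebe-type derivative bound for $F_{a,v,c}^{r}$ independent of both $r$ and of $(a,v)\in\Delta_c$---derived ultimately from the renormalization-invariance of the M{\"o}bius family and the commutation~\eref{commute}.
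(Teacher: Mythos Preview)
Your architectural plan---cylinders indexed by height words, a hyperbolic splitting obtained half directly and half via the Duality Theorem, and the resulting symbolic coding---matches the skeleton of the paper's argument in Section~4. The divergence is in how the hyperbolicity is actually obtained, and here your proposal has a genuine gap.

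You propose to extract contraction along rotation-number level sets by ``computing the tangent action of $\cT_c$ on each $\Pi_{k,c}$ explicitly'' and combining a Denjoy-type distortion bound with an ``infinitesimal compactness argument.'' But $D\cT_c$ is a $2\times 2$ matrix depending rationally on $(a,v)$ and on the heights $r_0,r_1$; distortion bounds control the nonlinearity of iterates of the \emph{dynamics} on the interval, not the spectral behaviour of the \emph{parameter} derivative, and compactness is unavailable precisely because the alphabet is infinite. You correctly flag this obstacle, but your proposed resolution (``a Koebe-type derivative bound\ldots derived ultimately from renormalization-invariance and the commutation~\eref{commute}'') is not a mechanism: neither invariance of the M{\"o}bius family nor the relation $G(F(z))=F(G(c^2z))$ yields such a bound on its own.

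The paper fills exactly this gap, and in the opposite order: it proves \emph{expansion} first, via a cone field $\cC_{a,v}\subset T_{a,v}\Delta_c$ of tangent directions $\bar v$ along which both components of the first-return pair $\tau_{a,v,c}$ are lifted strictly upward. For $c>1$ one checks by hand that $\partial/\partial a\in\cC_{a,v}$ (Proposition~\ref{a-monotone-1}); a chain-rule induction using~\eref{eqn-comp2} then shows that along any cone vector the $a$-coordinate of the $k$-th renormalization grows at least like the reciprocal of the $k$-th scaling factor, hence exponentially by the real \emph{a~priori} bound of Proposition~\ref{rab} (Proposition~\ref{expcone1}). This is automatically uniform in the heights and avoids any spectral computation. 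Contraction then follows from Duality; the level curves $L_{c,\rho}$ are continuous because rotation number is strictly monotone along cone directions (Proposition~\ref{rho-monotone}); and the horseshoe is assembled by first handling periodic sequences (Theorem~\ref{attractor4}) and then passing to limits, with uniform transversality of the stable and unstable curves in the unbounded-type case still borrowed from~\cite{KT2} (Proposition~\ref{transverse}).
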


\noindent
In the present work we will use a different approach to prove the above result for $c\in(0.5,2)$. 
We will further show that the attractor
$\cA_c$ is hyperbolic:

\begin{thm}
\label{thm-2}
Let $c\in(0.5,2)\setminus \{ 1\}$. There exists a $D\cT_c$-invariant splitting $E^u_z\oplus E^s_z$ of the tangent bundle $T_z\Delta_c$ over $\cA_c$ and $k_0\in\NN$ such that
$$||D\cT^{k_0}_cv||>\lambda||v||\text{ for }v\in E^u_z\text{ and }||D\cT^{k_0}_cv||<\lambda^{-1}||v||\text{ for }v\in E^s_z.$$

\end{thm}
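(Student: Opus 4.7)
The plan is to verify uniform hyperbolicity on $\cA_c$ via an invariant cone field criterion. By Theorem~\ref{thm-1}, $\cT_c|_{\cA_c}$ is topologically conjugate to the shift $\sigma^2$ on $\Sigma_\NN$, so every $z\in\cA_c$ lies in the interior of a cylinder $\Pi_{r_0,c}\cap \cT_c^{-1}(\Pi_{r_1,1/c})\cap\cdots$ on which $\cT_c$ is analytic. It therefore suffices to produce $D\cT_c$-invariant cone families $C^u_z, C^s_z\subset T_z\Delta_c$ with $C^u_z\cap C^s_z=\{0\}$ such that $D\cT_c^{k_0}$ uniformly expands vectors in $C^u$ and $D\cT_c^{-k_0}$ uniformly expands vectors in $C^s$; the invariant splitting $E^u\oplus E^s$ then arises as the intersection of the forward and backward iterates of these cones.

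For the unstable cones I would compute $D\cT_c$ directly. On $\Pi_{k,c}$ the pre-renormalization is the composition $(F_{a,v,c}^k\circ G_{a,v,c},\, F_{a,v,c})$ of the explicit M\"obius maps in~\eqref{eq2}, and after the orientation-reversing rescaling $\alpha_n$ determined by $\xi_n(0)$, one obtains closed formulas expressing $(a_{n+1},v_{n+1})$ as rational functions of $(a_n,v_n)$ whose coefficients depend only on $r_n$ and $c$. Composing two such steps yields an explicit formula for $\cT_c(a,v)$, and hence for its $2\times 2$ Jacobian. I expect that a family of cones about a dynamically defined direction transverse to the level sets of the rotation number is strictly invariant and uniformly expanded; the contraction estimate $K_1\lambda^{-n}$ in the first bullet of Theorem~\ref{thm-1} already guarantees contraction along those level sets, so the complementary direction should be the expanding one.

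For stable cones I would invoke the Duality Theorem in the form~\eqref{t conj}, namely $\cT_c^{-1}=\cI_c^{-1}\circ \cT_{1/c}\circ \cI_c$. Since $c\in(0.5,2)\setminus\{1\}$ iff $1/c\in(0.5,2)\setminus\{1\}$, and $\cI_c\colon\cD_c\to\cD_{1/c}$ is a diffeomorphism with non-vanishing Jacobian $(c-1-v)/(a^2cv)$, applying the unstable-cone construction to $\cT_{1/c}$ produces cones $\widetilde C^u_w$ over $\cI_c(\cA_c)=\cA_{1/c}$, and the pullbacks $C^s_z:=(D\cI_c(z))^{-1}(\widetilde C^u_{\cI_c(z)})$ form a $D\cT_c^{-1}$-expanding cone field on $\cA_c$. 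Transversality $C^u_z\cap C^s_z=\{0\}$ then reduces to checking that $D\cI_c$ does not map the $\cT_{1/c}$-unstable direction into the $\cT_c$-unstable direction, an algebraic condition on the explicit involution $\cI_c$ that can be verified by direct computation throughout the parameter range.

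The main obstacle is uniformity of expansion as the heights $r_n$ range over all of $\NN$. Since $DF_{a,v,c}^{r_n}$ compounds $r_n$ times before being normalized by the rescaling $\alpha_n$, the resulting $2\times 2$ Jacobian must be controlled independently of the sequence $(r_n)$. Bounded distortion of iterates of M\"obius maps, together with the compactness of $\Delta_c$ supplied by Proposition~\ref{invariant domain} and the geometric convergence of Theorem~\ref{convergence}, should yield the uniform bounds, but carrying out the $2\times 2$ computation in a form that isolates a definite expansion factor -- and pinpointing where the hypothesis $c\in(0.5,2)$ actually enters -- is the technical heart of the argument.
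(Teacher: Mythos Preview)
Your overall architecture---an invariant cone field for the unstable direction, then the Duality Theorem \eqref{t conj} to produce the stable direction---matches the paper. The substantive difference is in how the unstable cone is defined and how expansion is extracted.

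You propose to compute the $2\times 2$ Jacobian of $\cT_c$ from closed-form rational expressions for $(a',v')$ in terms of $(a,v)$, and then locate an expanding cone by inspection. The paper avoids this computation entirely. Its cone $\cC_{a,v}$ is defined not in coordinates but functionally: $\bar v\in\cC_{a,v}$ if the directional derivative $\nabla_{\bar v}$ \emph{lifts} the first return map $(\tilde F,\tilde G)$ pointwise. With this definition, the chain-rule identity \eqref{eqn-comp} makes cone invariance and expansion nearly automatic: since every pre-renormalization is a long composition of $\tilde F$'s and $\tilde G$'s, the inequality \eqref{eqn-comp2} propagates the lower bound, and the final rescaling by $1/\lambda_k$ furnishes the expansion factor via the real a~priori bound of Proposition~\ref{rab}. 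This is Proposition~\ref{expcone1}, and it yields uniform expansion \emph{independently} of the heights $r_n$ with no matrix algebra at all. The restriction $c\in(0.5,2)$ enters at exactly one place: showing $\cC_{a,v}\neq\emptyset$, done by exhibiting an explicit vector ($\partial/\partial a$ for $c>1$, $a\,\partial/\partial a + c\,\partial/\partial v$ for $c\in(0.5,1)$) in Propositions~\ref{a-monotone-1}--\ref{monotone-4}; see Remark~\ref{rem:explanation}.

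Your route has two soft spots. First, invoking the contraction bullet of Theorem~\ref{thm-1} to argue that ``the complementary direction should be the expanding one'' is not sufficient: orbit-level contraction along level sets of $\rho$ does not by itself produce a $D\cT_c$-invariant unstable subbundle with a uniform expansion rate, and in two dimensions a contracting direction plus area considerations still requires a Jacobian lower bound you have not supplied. Second, the explicit Jacobian you propose depends on the heights $r_0,r_1$ through the iterate $F^{r_0}$, so obtaining bounds uniform in $(r_n)\in\NN^\ZZ$ by direct computation is exactly the difficulty the paper's functional cone is designed to sidestep; you acknowledge this, but do not indicate how to resolve it or where $c\in(0.5,2)$ would enter. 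The paper's construction of the splitting is also slightly different from a pure cone argument: it first builds the curves $L_{c,(r_0,r_1,\ldots)}$ and $L_{c,(\ldots,r_{-1})}$ as stable and unstable manifolds of periodic points (Theorem~\ref{attractor4}), then takes their tangent fields $T^s,T^u$ as the splitting and reads off hyperbolicity from Proposition~\ref{expcone1} and duality.
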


The reasons for the restriction $c\in(0.5,2)$ are purely technical, as described in Remark \ref{rem:explanation}.
We conjecture that the statement of \thmref{thm-2} holds for all positive values of $c\neq 1$. 

Stable manifolds of points in $\cA_c$ are sets $L_{c,\rho}$ consisting of the pairs $(a,v)$ with 
$$\rho(\zeta_{a,v,c})=\rho\notin\QQ.$$
One of the corollaries of \thmref{thm-2} is that $L_{c,\rho}$ are $C^\omega$-smooth curves (\propref{prop:analytic}).  
This is an improvement on \cite{KT2}, where it was shown that for $\rho\notin\QQ$ the set $L_{c,\rho}$ is a $C^0$-curve.

Let us recall that an irrational number $\rho=[r_0,r_1,r_2,\ldots]$ is of {\it type bounded by $B$} if $\sup r_i\leq B<\infty$.
Let us denote $\cA_c^B$ the compact subset of $\cA_c$ consisting of maps of type bounded by $B$.

\begin{rem}
\label{rem:bdedtype}
For every $B\in\NN$ we obtain Theorems \ref{thm-1} and \ref{thm-2} for $\cA_c^B$  independently of \cite{KT2}. The proofs for the whole of
$\cA_c$ use a result of \cite{KT2} on the geometry of the curves $L_{c,\rho}$.
\end{rem}

\section{Expansion of renormalization}

In this section we will describe the expansion properties of renormalization.
We begin by defining a subset $\cC$ in the tangent bundle  $T\Delta_c$ as follows. 
As before, for a pair $(F_{a,v,c},G_{a,v,c}):[-1,a]\to [-1,a]$ let us define 
$$\tau_{a,v,c}\equiv (\wl F_{a,v,c}, \wl G_{a,v,c})$$
 to be the first return map of the interval $[-1,0]$:
$$\wl F_{a,v,c}\equiv F_{a,v,c}|_{[-1,F_{a,v,c}^{-1}(0)]},\; \wl G_{a,v,c}\equiv G_{a,v,c}\circ F_{a,v,c}|_{[F_{a,v,c}^{-1}(0),0]}.$$
We set
$$\cC_{a,v}=\{\bar v=(\alpha,\nu)\; |\;\min(\inf_{x\in[-1,F_{a,v,c}^{-1}(0)]}\nabla_{\bar v}\wl F_{a,v,c},\inf_{x\in[F_{a,v,c}^{-1}(0),0]}\nabla_{\bar v}\wl G_{a,v,c})>0\}\subset T_{a,v}\Delta_c.$$

It is obvious from the definition, that:

\begin{prop}
\label{properties cone}
For every $(a,v)\in \Delta_c$ the set $\cC_{a,v}$ is an open cone in $T_{a,v}\Delta_c$.
%%%% For each $(a,v)$ the cone $\cC_{a,v}$ has at most finitely many connected components.
\end{prop}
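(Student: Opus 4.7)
The plan is to verify directly the two properties that together constitute being an open cone: invariance under multiplication by positive scalars, and openness in $T_{(a,v)}\Delta_c$. Both follow immediately from the structure of the defining condition, which is an infimum of directional derivatives.

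For the cone property, I would note that the directional derivatives $\nabla_{\bar v}\wl F_{a,v,c}(x)$ and $\nabla_{\bar v}\wl G_{a,v,c}(x)$ are, for each fixed $x$, \emph{linear} functionals in the tangent vector $\bar v=(\alpha,\nu)\in T_{(a,v)}\Delta_c$. Hence for any $t>0$ one has
$$\nabla_{t\bar v}\wl F_{a,v,c}(x)=t\,\nabla_{\bar v}\wl F_{a,v,c}(x),$$
and similarly for $\wl G$. Pointwise positivity, and therefore positivity of the infima over the compact subintervals of $[-1,0]$, is preserved under this scaling, which shows $t\bar v\in\cC_{a,v}$ whenever $\bar v\in\cC_{a,v}$.

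For openness, the key remark is that the family $(F_{a,v,c},G_{a,v,c})$, and hence also $\wl F_{a,v,c}$, $\wl G_{a,v,c}$ and the breakpoint $F_{a,v,c}^{-1}(0)$, depend real-analytically on the parameters in a neighborhood of the compact intervals on which they are defined. Consequently the maps
$$\bar v\mapsto \inf_{x\in[-1,F_{a,v,c}^{-1}(0)]}\nabla_{\bar v}\wl F_{a,v,c}(x),\quad \bar v\mapsto \inf_{x\in[F_{a,v,c}^{-1}(0),0]}\nabla_{\bar v}\wl G_{a,v,c}(x)$$
are each the infimum, over a compact interval in $x$, of a family of continuous linear functionals of $\bar v$, and so are themselves continuous functions of $\bar v$. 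Demanding that both be strictly positive is an open condition, which gives openness of $\cC_{a,v}$.

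There is essentially no obstacle here; the statement is flagged in the paper as immediate from the definition. The only routine point to check is the joint continuity in $(x,\bar v)$ needed to pass from pointwise positivity to the continuous dependence of the two infima on $\bar v$, and this is standard given the analyticity and compactness already noted.
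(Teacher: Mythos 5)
Your proposal is correct, and since the paper treats this proposition as immediate from the definition (offering no proof of its own), the verification you supply — scale invariance of positivity from linearity of $\bar v\mapsto\nabla_{\bar v}$, plus openness from continuity of the two infima — is exactly the routine argument being implicitly invoked. One small streamlining worth noting: each of the two infima is an infimum over $x$ of functions that are \emph{linear} in $\bar v$, hence a concave function of $\bar v$ on all of $T_{a,v}\Delta_c\cong\RR^2$, and concave functions on $\RR^2$ are automatically continuous; this sidesteps the appeal to joint continuity in $(x,\bar v)$, though that route is also fine given that the coefficient functions $\partial_a\wl F,\partial_v\wl F,\partial_a\wl G,\partial_v\wl G$ are continuous on the relevant compact intervals.
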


We next prove:
\begin{prop}
\label{rho-monotone}
Let $\bar \gamma(t)=(a(t),v(t)):(0,1)\to\Delta_{c}$ be a smooth curve with the property 
$$\frac{d}{dt}\bar \gamma(t)\in \cC_{\bar\gamma(t)}\text{ for all }t.$$
Then the function
$$\rho(t)\equiv \rho(\zeta_{a(t),v(t),c})$$
is non-decreasing. Furthermore, if $\rho(t_0)\notin \QQ$ then $\rho(t)$ is strictly increasing at $t_0$.
\end{prop}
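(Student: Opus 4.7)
The plan is to reduce the claim to the classical monotonicity of rotation number for pointwise-monotone one-parameter families of orientation-preserving circle homeomorphisms. First I would unpack the cone condition. By definition, $\bar\gamma'(t)\in\cC_{\bar\gamma(t)}$ says that at every $t$ the directional derivatives $\nabla_{\bar\gamma'(t)}\wl F_{\bar\gamma(t),c}(x)$ and $\nabla_{\bar\gamma'(t)}\wl G_{\bar\gamma(t),c}(x)$ are strictly positive at every $x$ in the respective domains $[-1,-a(t)/c]$ and $[-a(t)/c,0]$. Treating $\wl F$ and $\wl G$ as real-analytic functions of $(a,v,x)$ on the appropriate region, the mean value theorem yields, for $t_1>t_0$, the strict pointwise inequalities $\wl F_{\bar\gamma(t_1),c}(x)>\wl F_{\bar\gamma(t_0),c}(x)$ and $\wl G_{\bar\gamma(t_1),c}(x)>\wl G_{\bar\gamma(t_0),c}(x)$ wherever both sides are defined.

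Next, I would pass to the circle map via the projection $\pi$ of~\eref{circle map}: the pair $\tau_{\bar\gamma(t)}=(\wl F_{\bar\gamma(t),c},\wl G_{\bar\gamma(t),c})$ descends to an orientation-preserving circle homeomorphism $f_{\bar\gamma(t)}:\TT\to\TT$ depending continuously on $t$. The gluing at the interior break point $F^{-1}(0)=-a/c$ is consistent since $\wl F(F^{-1}(0))=0$ and $\wl G(F^{-1}(0))=G(0)=-1\equiv 0\pmod{\ZZ}$. Although the break point $-a(t)/c$ moves continuously in $t$, the two branches agree at the transition and each is pointwise strictly increasing in $t$; consequently for every fixed $x\in\TT$, the function $t\mapsto f_{\bar\gamma(t)}(x)$ is continuous and strictly increasing.

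Finally, I appeal to the classical monotonicity of the rotation number for such families of orientation-preserving circle homeomorphisms (Poincar\'e; see, e.g., de~Melo--van~Strien, \emph{One-Dimensional Dynamics}): under the hypotheses established above, $\rho(t)$ is non-decreasing in $t$, and strictly increasing at every $t_0$ with $\rho(t_0)\notin\QQ$. Weak monotonicity is immediate from $F_t^n(x)\ge F_{t_0}^n(x)$ by division by $n$; the main obstacle is strict monotonicity at irrational values, which is the content of the classical statement that Arnold tongues are confined to rational rotation numbers. Since this last point is a topological fact about circle homeomorphisms, insensitive to the break singularity of $f$, I would invoke it as a citation rather than reproduce its proof here.
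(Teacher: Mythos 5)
Your proof is correct, but it takes a genuinely different route from the paper's. The paper stays entirely in the language of the pair $\zeta_t$ and its continued fraction: it shows by an easy induction (via the composition formula for directional derivatives and the cone condition) that the closest-return iterates $\zeta_t^k(0)$ move monotonically in $t$, deduces that the heights $r_{2i}$ decrease and $r_{2i+1}$ increase with $t$, and concludes from the alternating monotonicity of $[r_0,r_1,\ldots]$ in its entries. You instead pass to the circle homeomorphism $f_{a,v,c}$ of~\eref{circle map}, verify that its lift is pointwise strictly increasing in $t$, and cite classical Poincar\'e monotonicity of the rotation number, together with the fact that mode-locking occurs only at rationals, for the last assertion. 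Your reduction to standard circle-map theory is structurally cleaner; the paper's argument is more elementary, self-contained, and produces as a byproduct the alternating monotonicity of heights, which is useful elsewhere in a renormalization context. One point worth tightening: the claim that $t\mapsto f_{\bar\gamma(t)}(x)$ is strictly increasing \emph{across} the moving break point $F_{\bar\gamma(t),c}^{-1}(0)=-a(t)/c$ needs a word more than the gluing consistency. The cone condition forces $\nabla_{\bar\gamma'(t)}F$ to be positive at $x=-a/c$, hence $\dot a(t)>0$ and the break point drifts strictly leftward, so each fixed $x$ crosses it at most once; and since both branch values at the break point are the gluing point $0\equiv -1$, the lift is continuous and strictly increasing through the transition. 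You state the gluing consistency but not this monotone-drift step, which is what rules out pathologies at the transition.
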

\begin{proof}
For $(a(t),v(t))\in\bar\gamma(t)$ set $\zeta_t\equiv\zeta_{a(t),v(t),c}$. Fix $t_0\in(0,1)$ and let $\zeta_{t_0}^k(0)\neq 0$ 
be a closest return of $0$
under the dynamics of the pair $\zeta_{t_0}$. 
An easy induction shows that $\frac{d}{dt}\zeta_t^k(0)|_{t=t_0}$ is positive. Thus, the 
heights $r_{2i}$ of renormalizations $\cR^{2i}\zeta_t$ {\it decrease}, and the heights $r_{2i+1}$ of renormalizations $\cR^{2i+1}\zeta_t$
{\it increase} with $t$. Hence, the value of the rotation number $\rho=[r_0,r_1,\ldots]$ is a non-decreasing function of $t$.
The last assertion is similarly evident and is left to the reader. 
\end{proof}

\subsection*{The cone $\cC_{a,v}$ is not empty.}
Let us show that for every $c\in(0.5,2)$ and for every $(a,v)\in \overset{\circ}{\Delta_c}$ there is a non-zero tangent vector inside $\cC_{a,v}$.
We identify such directions explicitly in the statements below.

\begin{prop}
\label{a-monotone-1}
Fix $c>1$. For every pair $(a,v)\in\overset{\circ}{\Delta_c}$ 
set $\bar v= \frac{\partial}{\partial a}\in T_{a,v}\Delta_c$. Then
$$\inf_{z\in[-1,0]}\grad_{\bar v} { F_{a,v,c}(z)}>\frac{1}{c}>0,\text{ and }\grad_{\bar v} G_{a,v,c}(z)>0\text{ for all }z\in(0,a].$$
\end{prop}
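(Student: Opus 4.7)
The plan is to read off both partial derivatives $\partial_a F$ and $\partial_a G$ from the explicit rational formulas defining $F_{a,v,c}$ and $G_{a,v,c}$, and then verify positivity by inspecting the sign of each factor, using the strict inequalities $0<a<c$ and $0<v<c-1$ that characterize the interior $\overset{\circ}{\Delta_c}$.

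First I would handle $F$. Differentiating $F_{a,v,c}(z)=(a+cz)/(1-vz)$ in $a$ with $v,c$ held fixed yields $\nabla_{\bar v}F_{a,v,c}(z)=1/(1-vz)$. For $z\in[-1,0]$ and $v\ge 0$, the denominator $1-vz$ lies in $[1,1+v]$, so the infimum of $1/(1-vz)$ over this interval equals $1/(1+v)$ and is attained at $z=-1$. The interior condition $v<c-1$ then upgrades this bound to $1/(1+v)>1/c$, as required.

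Next I would treat $G$. Applying the quotient rule to $G_{a,v,c}(z)=a(z-c)/(ac+z(1+v-c))$ in $a$, the terms proportional to $ac$ should cancel between the two pieces of the numerator, and I expect to obtain
$$\nabla_{\bar v}G_{a,v,c}(z) \;=\; \frac{z\,(c-z)\,(c-1-v)}{\bigl[ac+z(1+v-c)\bigr]^{2}}.$$
For $z\in(0,a]$ the three factors in the numerator are each strictly positive: $z>0$ by hypothesis, $c-z\ge c-a>0$ from $a<c$, and $c-1-v>0$ from $v<c-1$. The affine denominator does not vanish on $[0,a]$ because it takes the positive values $ac$ at $z=0$ and $a(1+v)$ at $z=a$, so it stays positive throughout. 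Positivity of $\nabla_{\bar v}G_{a,v,c}$ on $(0,a]$ follows.

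There is no substantial obstacle; the whole proposition reduces to a direct computation and sign inspection. The only point worth flagging is that the strict inequality $\inf\nabla_{\bar v}F>1/c$ depends essentially on $v<c-1$ being strict, which is why the statement is restricted to the interior of $\Delta_c$ and not its closure: on the boundary stratum $v=c-1$ one would only obtain $\inf\nabla_{\bar v}F=1/c$, and the corresponding factor $(c-1-v)$ in the formula for $\nabla_{\bar v}G$ would degenerate to zero.
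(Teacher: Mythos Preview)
Your argument is correct and follows the same approach as the paper: compute $\partial_a F=1/(1-vz)$ and $\partial_a G=z(z-c)(1+v-c)/(ac+z(1+v-c))^2$ directly and check signs using $0<v<c-1$ and $0<a<c$. Your version is slightly more detailed (you identify the minimizer $z=-1$ explicitly and verify that the denominator of $\partial_a G$ does not vanish on $[0,a]$), but the substance is identical.
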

\begin{proof}
The computations are easy:
$$\frac{\partial F_{a,v,c}(z)}{\partial a}
=\frac{1}{1-vz}.$$
Noting that $c-1> v> 0$, and $z\in[-1,0]$ we get 
$$\inf_{z\in[-1,0]}\frac{\partial F_{a,v,c}(z)}{\partial a}>\frac{1}{c}.$$
Further,
$$\frac{\partial G_{a,v,c}}{\partial a}=\frac{(z-c)z(1+v-c)}{(ac+z(1+v-c))^2}.$$ 
To estimate the numerator, note that $z\leq a<c$, so $z-c<0$ and 
$v<c-1$, so $1+v-c<0$. Hence,
$$\frac{\partial G_{a,v,c}}{\partial a}>0 \text{ for }z\in(0,a].$$

\end{proof}

%%We state the following corollary, whose proof will be postponed until the next section.
%%\begin{prop}
%%\label{a-monotone-2}
%%Let $c>1$. For $(a,v)\in \Delta_c\cap \cO^1_c$ 
%%$$\inf_z\frac{\partial\cR \zeta_{a,v,c}(z)}{\partial a}>0.$$ 
%%\end{prop}

%%While we will not use the following result of \cite{KK} we'd like to note the following:
%%\begin{rem}[see \cite{KK}]
%%Recall the definition (\ref{circle map}) of the circle map $f_{a,v,c}:\TT\to\TT$ which corresponds to the pair $\zeta_{a,v,c}$,
%%and denote $\bar f_{a,v,c}:\RR\to\RR$ the lift for which $\bar f_{a,v,c}(0)\in [0,1)$. 
%%Let $c>1$ and $(a,v)\in\overset{\circ}{\Delta_c}$, then
%%$$\inf_z\partial{\bar f_{a,v,c}(z)}{\partial a}>0.$$
%%This implies, in particular, that the rotation number 
%%$\rho(f_{a,v,c})$ is a non-decreasing function of $a$.
%%\end{rem}

 Figure \ref{monotone} illustrates the above monotonicity property for $c>1$. 

\begin{figure}[h]
\centerline{\includegraphics[width=0.7\textwidth]{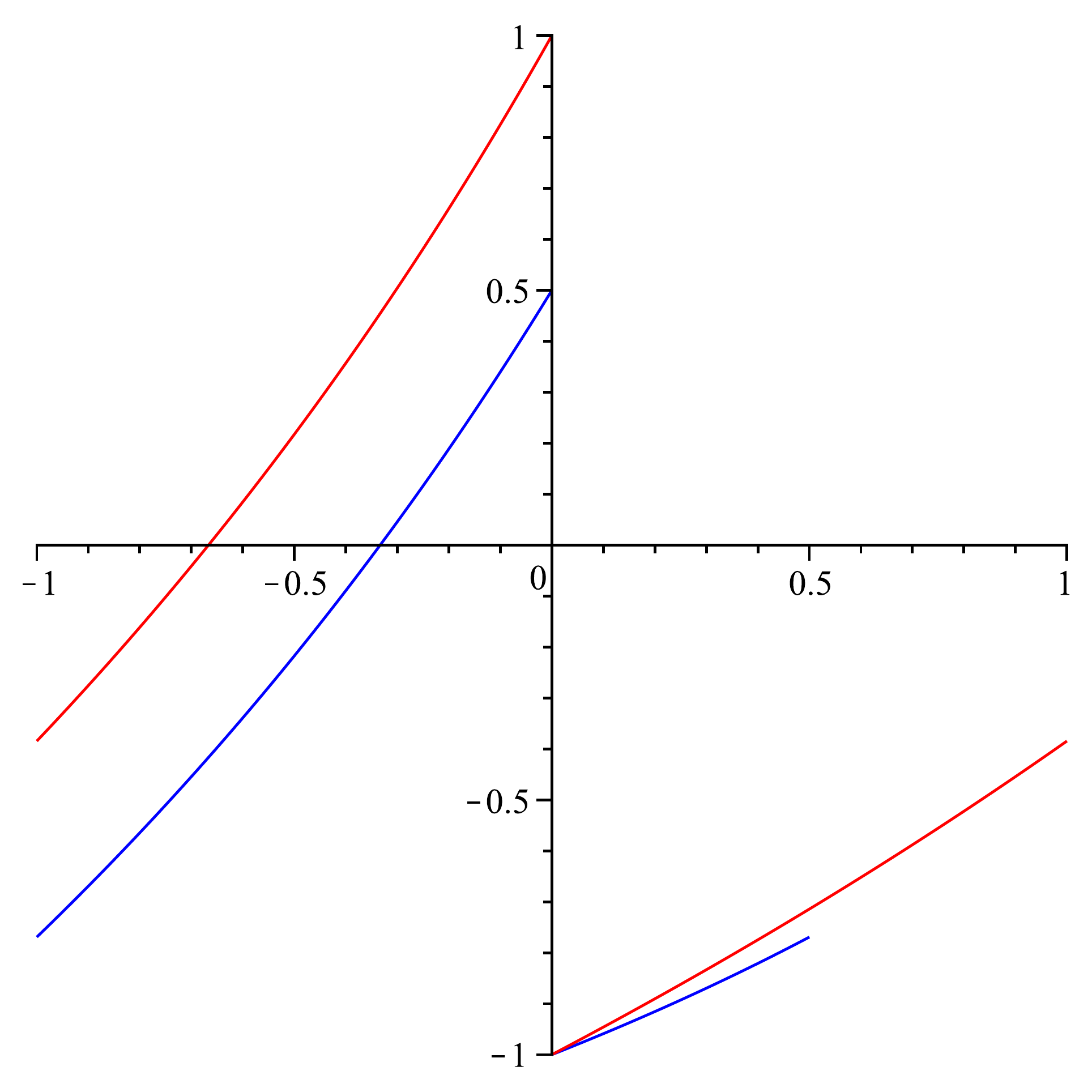}}
\caption{\label{monotone}The graphs of $F_{a,v,c},G_{a,v,c}$ for $c=1.5$, $v=0.3$, and $a=1$ (above) and $a=0.5$ (below).
}
\end{figure}

\begin{prop}
\label{monotone-4}
Suppose $c\in(0.5,1)$. 
For every pair $(a,v)\in\overset{\circ}{\Delta_c}$
set $\bar v= a\frac{\partial}{\partial a}+c\frac{\partial}{\partial v}$. Then
$$\inf_{z\in[-1,0]}\grad_{\bar v} { F_{a,v,c}(z)}>0,\text{ and }\grad_{\bar v} G_{a,v,c}(z)>0\text{ for all }z\in(0,a].$$
\end{prop}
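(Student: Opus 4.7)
The plan is to compute the two directional derivatives $\nabla_{\bar v} F_{a,v,c}$ and $\nabla_{\bar v} G_{a,v,c}$ explicitly, write each as a single rational function, and analyze the sign of its numerator on the relevant interval using the constraints $0 < a \le c$ and $c-1 \le v \le 0$ that define $\Delta_c$.

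For $F$, combining $a\,\partial_a F$ with $c\,\partial_v F$ over the common denominator $(1-vz)^2$ should yield
$$\nabla_{\bar v} F_{a,v,c}(z) \;=\; \frac{c^2 z^2 + a(c-v)\, z + a}{(1-vz)^2}.$$
The denominator is positive on $[-1,0]$: since $v \le 0$ and $z \le 0$ one has $vz \ge 0$, while $vz \le (1-c)\cdot 1 = 1 - c$, so $1 - vz \in [c,1]$. The numerator $N(z)$ is an upward-opening parabola with $N(0)=a>0$ and $N(-1)=c^2 + a(1-c+v)>c^2$ using $v>c-1$. The only potential issue is an interior minimum at $z^\ast = -a(c-v)/(2c^2)$, where $N(z^\ast) = a\bigl(1 - a(c-v)^2/(4c^2)\bigr)$; the constraints $a \le c$ and $c-v \le 1$ give $a(c-v)^2 \le c < 4c^2$ (strict for $c>1/4$), forcing $N(z^\ast) > 0$.

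For $G$, the analogous calculation produces a pleasant telescoping: after pulling out the common factor $a\,z\,(z-c)/D^2$ with $D = ac+z(1+v-c)$, the combination $a\,\partial_a G + c\,\partial_v G$ factors as
$$\nabla_{\bar v} G_{a,v,c}(z) \;=\; \frac{a\, z\, (z-c)\,(1+v-2c)}{\bigl(ac+z(1+v-c)\bigr)^2}.$$
The denominator is positive on $(0,a]$ by \propref{homeo}. For $z \in (0,a]$ in the interior: $az > 0$, $z - c \le a - c < 0$, and $1 + v - 2c \le 1 - 2c < 0$ because $v \le 0$ and $c > 1/2$. The two negative factors combine to a positive numerator.

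The main obstacle is spotting the \emph{right} tangent direction $\bar v$; once chosen, everything reduces to sign analysis. The hypothesis $c > 1/2$ is forced precisely by the factor $1+v-2c$ in the $G$-calculation, which changes sign along $v = 2c-1$: only for $c > 1/2$ does this line lie strictly above the strip $\{v \le 0\} \supset \Delta_c$, making the factor uniformly negative. The $F$-estimate alone would tolerate any $c > 1/4$. This is why the single linear direction $a\,\partial_a + c\,\partial_v$ covers exactly the advertised range, and why \propref{a-monotone-1} and this proposition together yield a non-empty cone throughout the range $c \in (0.5, 2)\setminus\{1\}$ required for \thmref{thm-2}.
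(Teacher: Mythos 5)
Your proposal is correct and follows the approach the paper implicitly intends: the paper states only ``The similarly explicit computation is left to the reader,'' and you have carried out exactly the direct sign analysis paralleling the proof of \propref{a-monotone-1}. Both computations check out: for $F$, $\nabla_{\bar v}F = \frac{c^2z^2 + a(c-v)z + a}{(1-vz)^2}$ and your bound on the interior minimum of the numerator (using $a\le c$, $c-v\le 1$, and $c>1/4$) is sound; for $G$, the factorization $\nabla_{\bar v}G = \frac{az(z-c)(1+v-2c)}{(ac+z(1+v-c))^2}$ is correct, and on $\cir{\Delta}_c$ the three factors $az>0$, $z-c<0$, $1+v-2c<0$ give positivity. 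Your closing remark identifying $1+v-2c$ as the precise source of the restriction $c>1/2$ is a useful observation that the paper leaves unstated.
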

The similarly explicit computation is left to the reader.
As a corollary of the Chain Rule, we have:
\begin{prop}
\label{monotone-4a}
Suppose $(a,v)\in\cir{\Delta}_c$ and $\bar v\in T_{a,v}\Delta_c$ with the properties
$$\inf_{z\in[-1,0]}\grad_{\bar v} { F_{a,v,c}(z)}>0,\text{ and }\grad_{\bar v} G_{a,v,c}(z)>0\text{ for all }z\in(0,a].$$
Then $\bar v\in\cC_{a,v}$.
\end{prop}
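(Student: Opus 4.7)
The plan is to reduce the claim to a routine chain-rule computation together with the orientation-preserving property of $F_{a,v,c},G_{a,v,c}$ from Proposition~\ref{homeo}. Recall that $\wl F_{a,v,c}$ is the restriction of $F_{a,v,c}$ to $[-1,F_{a,v,c}^{-1}(0)]$, so the first coordinate of the cone condition is immediate from the hypothesis: $\grad_{\bar v}\wl F_{a,v,c}(z)=\grad_{\bar v}F_{a,v,c}(z)$, and by hypothesis the infimum of this over $[-1,0]\supset[-1,F^{-1}(0)]$ is strictly positive.

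The substantive step is the second coordinate, where $\wl G_{a,v,c}(z)=G_{a,v,c}(F_{a,v,c}(z))$ on $z\in[F_{a,v,c}^{-1}(0),0]$. Since both $F$ and $G$ depend on $(a,v)$, the directional derivative along $\bar v$ splits via the chain rule as
\[
\grad_{\bar v}\wl G_{a,v,c}(z)\;=\;(\grad_{\bar v}G_{a,v,c})(F_{a,v,c}(z))\;+\;G_{a,v,c}'(F_{a,v,c}(z))\cdot \grad_{\bar v}F_{a,v,c}(z).
\]
For $z$ in the interior of $[F_{a,v,c}^{-1}(0),0]$ one has $F_{a,v,c}(z)\in(0,a)$, so the first summand is strictly positive by hypothesis, the factor $G_{a,v,c}'(F_{a,v,c}(z))$ is positive by Proposition~\ref{homeo}, and $\grad_{\bar v}F_{a,v,c}(z)>0$ again by hypothesis; hence both summands are positive.

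The only delicate point is the endpoint $z=F_{a,v,c}^{-1}(0)$, where $F_{a,v,c}(z)=0$ and the hypothesis on $\grad_{\bar v}G$ only applies on $(0,a]$. Here one uses the identity $G_{a,v,c}(0)=-1$ from \eqref{eq2}, which is independent of $(a,v)$ and therefore forces $(\grad_{\bar v}G_{a,v,c})(0)=0$; so the first summand vanishes. However the second summand equals $G_{a,v,c}'(0)\cdot \grad_{\bar v}F_{a,v,c}(F_{a,v,c}^{-1}(0))$, which is strictly positive because $G_{a,v,c}'(0)>0$ by Proposition~\ref{homeo} and $\grad_{\bar v}F_{a,v,c}$ is uniformly positive on $[-1,0]$. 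Thus $\grad_{\bar v}\wl G_{a,v,c}$ is a continuous, strictly positive function on the compact interval $[F_{a,v,c}^{-1}(0),0]$, whence $\inf_{[F_{a,v,c}^{-1}(0),0]}\grad_{\bar v}\wl G_{a,v,c}>0$ and $\bar v\in\cC_{a,v}$.

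The only conceptual obstacle is this boundary bookkeeping at $F_{a,v,c}^{-1}(0)$, where one has to remember that the normalization $G(0)=-1$ kills the parameter-derivative term and shift the positivity onto the composition factor; everything else is a direct application of the chain rule combined with the already established orientation-preserving property of the pair.
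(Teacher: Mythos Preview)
Your proof is correct and follows exactly the approach the paper intends: the paper states Proposition~\ref{monotone-4a} ``as a corollary of the Chain Rule'' without further argument, and your computation is precisely the chain-rule identity (\ref{eqn-comp}) applied to $\wl G_{a,v,c}=G_{a,v,c}\circ F_{a,v,c}$. Your endpoint analysis at $z=F_{a,v,c}^{-1}(0)$, using $G_{a,v,c}(0)=-1$ to kill $(\nabla_{\bar v}G_{a,v,c})(0)$, is a detail the paper omits but which is needed to get strict positivity of the infimum on the closed interval.
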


We remark:
\begin{rem}
\label{rem:explanation}
We do not know if the cone field $\cC$ is non-empty when the restriction $c>0.5$ is removed. 
This is the only place in the proofs where this restriction is needed.

Moreover, it would be sufficient for our
purposes to find tangent vectors which ``lift'' first return maps of renormalization intervals of a deeper level,
instead of the first return map of $[-1,0]$. However, if $\tl F$ and
$\tl G$ are replaced by a composition corresponding to such a return map, an
explicit calculation of the directional derivative similar to the ones above becomes quite involved, and a brute force approach
to finding a non-zero tangent vector with the desired property may become impractical.
\end{rem}

\subsection*{The expansion properties of the cone field $\cC$.}
We begin by recalling
how the composition operator acts on vector fields. For a pair of smooth functions $f$ and $g$, denote
$$\text{ Comp}(f,g)=f\circ g,$$
viewed as an operator $C^\infty\times C^\infty\to C^\infty$ and let $D\text{Comp}$ denote its differential. An elementary calculation shows that 
\begin{equation}
\label{eqn-comp}
D\text{Comp}|_{(f,g)}:(\phi,\gamma)\to f'\circ g\cdot \gamma+\phi\circ g.
\end{equation}
The significance of the formula (\ref{eqn-comp}) for us lies in the following trivial observation: if $f$ and $g$ are both increasing functions,
and the vector fields $\phi$ and $\gamma$ are non-negative, then
\begin{equation}
\label{eqn-comp2}
\inf_x D\text{Comp}|_{(f,g)}(\phi,\gamma)\geq \inf_x\phi.
\end{equation}
For a pair $\zeta_{a,v,c}$ with $(a,v)\in\cO^2_c$ let us set 
$$\lambda_{a,v,c}=F_{a,v,c}^{r_0}(-1)<0,$$
where, as before, $r_i$ denotes the height of $\cR^i\zeta_{a,v,c}$.
Note that
\begin{equation}
\label{def-r2}
\begin{split}
\cT\zeta_{a,v,c}=\Bigg( & - \frac{1}{\lambda_{a,v,c}}(F_{a,v,c}^{r_0}\circ G_{a,v,c})^{r_1}\circ F_{a,v,c}(-\lambda_{a,v,c} x) ,\\
& -\frac{1}{\lambda_{a,v,c}}F_{a,v,c}^{r_0}\circ G_{a,v,c}(-\lambda_{a,v,c} x)\Bigg).
\end{split}
\end{equation}

We will require the following standard real {\it a priori} bound (see \cite{KT1}):
\begin{prop}
\label{rab}
There exists $\delta>0$ such that for every $(a,v)\in\cO^{2}_c$,
%%setting $a'=A_{1,c}(a,v)$ and $v'=V_{1,c}(a,v)$ we have
$$\lambda_{a,v,c}>-\frac{1}{1+\delta}.$$
\end{prop}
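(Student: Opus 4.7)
I would combine two elementary dynamical bounds with a contradiction argument. First, since $F_{a,v,c}(x)>x$ on $[-1,0]$ for $(a,v)\in\cO^1_c$ (the quadratic $vx^2+(c-1)x+a$ is positive there, by the analysis underlying \propref{nonrenorm}), the orbit $\{F_{a,v,c}^k(-1)\}_{k=0}^{r_0}$ is monotone increasing, so $\lambda_{a,v,c}\geq F_{a,v,c}(-1)=-b=-(c-a)/(v+1)$. Second, since $F_{a,v,c}(\lambda_{a,v,c})=F_{a,v,c}^{r_0+1}(-1)>0$, a direct calculation using the Möbius formula $F_{a,v,c}(z)=(a+cz)/(1-vz)$ yields $\lambda_{a,v,c}>-a/c$. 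These two bounds furnish the desired $\delta$ outside any neighborhood on which both $b$ is close to $1$ and $a/c$ is close to~$1$.

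I'd then argue by contradiction in the remaining degenerate regime. Suppose there is a sequence $(a_n,v_n)\in\cO^2_c$ with $\lambda_n\to -1$. The two bounds above force $b_n\to 1^-$ and $a_n/c\to 1^-$, hence $a_n\to c$; and via $b_n=(c-a_n)/(v_n+1)$ with $c-a_n\to 0$ then also $v_n+1\to 0$. So the sequence must accumulate at the corner $(c,-1)\in\partial\cD_c$ at which $F_{a,v,c}(z)$ degenerates to the constant function $z\mapsto c$. Passing to a subsequence and parameterizing the approach by $\varepsilon_n=v_n+1\to 0$, one extracts a limiting rescaled dynamics near the point $-1$ that one aims to show is incompatible with $(a_n,v_n)$ remaining in $\cO^2_c$.

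The crux is to make this incompatibility explicit. My plan is to compute the first-renormalization parameters $(a'_n,v'_n)=\cR_c(a_n,v_n)$ from the pre-renormalization formulas implicit in \eref{def-r2} and to show that the image escapes $\cO^1_{1/c}$ (or even $\cD_{1/c}$) as $n\to\infty$, contradicting $(a_n,v_n)\in\cO^2_c$. In the simplest stratum $r_0=1$, a short calculation with the Möbius algebra shows $a'_n\to c$, which for $c>1$ exceeds the admissible upper bound $1/c$, while for $c<1$ one expects the renormalized pair to land in the closure of the non-renormalizable region of $\cD_{1/c}$ described in \propref{nonrenorm}. The main obstacle is making this asymptotic analysis uniform in the height $r_0$: for larger $r_0$ one needs to track compositions $F_{a_n,v_n,c}^{r_0}\circ G_{a_n,v_n,c}$ near the degenerate corner, where several small parameters interact. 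An attractive alternative that may avoid the heaviest computations is the Duality Theorem and identity \eref{t conj}: the involution $\cI_c$ extends continuously to $\partial\cD_c$ and sends $(c,-1)$ to $(-1,1/c)\in\partial\cD_{1/c}$, so the corner-degeneration on one side corresponds to a boundary-degeneration on the dual side; combining with the $\cT_c$-invariance of $\Delta_c$ (\propref{invariant domain}) should produce the contradiction and handle $c>1$ and $c<1$ on an equal footing. The resulting constant $\delta>0$ depends only on $c$.
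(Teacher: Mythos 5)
The paper does not supply its own proof of this proposition: it is stated as a ``standard real \emph{a priori} bound'' and attributed to \cite{KT1}, where it is established in the general setting of $C^{2+\eps}$ circle maps with a break via distortion-type estimates. Your attempt, by contrast, aims for a self-contained, purely algebraic argument specific to the M\"obius family --- an attractive and genuinely different route, and one worth pursuing given how explicit the family $\zeta_{a,v,c}$ is.

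Your two preliminary bounds are correct. The monotone increase of $F^k(-1)$ on $[-1,0]$ for renormalizable pairs gives $\lambda_{a,v,c}\geq F(-1)=-b$, and $F(\lambda_{a,v,c})>0$ together with the M\"obius formula indeed yields $\lambda_{a,v,c}>-a/c$ (the denominator $1-vz$ being positive on $[-1,0]$). You also correctly identify that these bounds degenerate exactly as $(a,v)\to(c,-1)\in\partial\cD_c$, and that the constraint $b<a/c$ extracted from the second bound forces the approach to this corner along a cusp.

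However, the proof has a genuine gap, which you acknowledge yourself: the claimed incompatibility of the corner degeneration with $(a,v)\in\cO^2_c$ is never established. Your back-of-the-envelope remark that ``$a'_n\to c$'' for the $r_0=1$ stratum is a computation that would need to be carried out carefully and, crucially, made \emph{uniform in $r_0$}; near the corner, $F'(-1)\sim(1+c)/(1+v)\to\infty$, so the higher iterates $F^{r_0}(-1)$ behave very differently from $F(-1)$, and tracking the compositions $F^{r_0}\circ G$ across all heights is exactly the hard part. The alternative route via the involution $\cI_c$ and \eref{t conj} is only gestured at, and it is not clear that it avoids the same uniformity issue (the dual picture near $\cI_c(c,-1)$ involves a similar boundary degeneration). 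Until one of these two programmes is actually executed, the statement remains unproved in your write-up, and the reliance on \cite{KT1} cannot yet be dispensed with.

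Separately, be careful about strictness: you write $\lambda_{a,v,c}\geq F(-1)$ and $F(\lambda_{a,v,c})>0$; the first is equality only when $r_0=1$ and the second is the definition of the height being exactly $r_0$, so both are fine, but the phrase ``outside any neighborhood on which both $b$ is close to $1$ and $a/c$ is close to $1$'' is doing real work and should be made quantitative: one needs a single $\delta$ valid simultaneously over the non-compact set $\cO^2_c$, not merely away from the corner.
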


A key point for us is the following statement:

\begin{prop}
\label{expcone1}
There exist $k\in\NN$ and $\delta>0$ such that the following holds. Let $(a,v)\in\cO_c^{2k}$ and let $\bar v\in\cC_{a,v}$.
Then
$$\nabla_{\bar v} A_{k,c}(a,v)>C(1+\delta)^k,$$
where the constant $C>0$ depends only on $\bar v$.
\end{prop}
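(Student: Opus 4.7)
\noindent The plan is to combine the composition formula \eqref{eqn-comp} with the real \emph{a priori} bound of Proposition \ref{rab}, applied to the explicit recursion \eqref{def-r2} defining $\cT_c$. First, I would iterate \eqref{def-r2} to write $p\cT_c^k\zeta_{a,v,c}=(\Phi_k,\Psi_k)$ where $\Phi_k,\Psi_k$ are explicit compositions of iterates of $F_{a,v,c}$ and $G_{a,v,c}$; correspondingly, $\cT_c^k\zeta_{a,v,c}$ is obtained from $(\Phi_k,\Psi_k)$ by the linear rescaling $x\mapsto x/|\Lambda_k|$, where $\Lambda_k<0$ is the total rescaling factor---itself the value of a composition of $F$'s (and $G$'s) at $-1$. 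In particular $A_{k,c}(a,v)=\Phi_k(0)/|\Lambda_k|$, so both $\Phi_k(0)$ and $|\Lambda_k|$ are positive.

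Next, I would rewrite $\Phi_k$ and $\Lambda_k$ as compositions of iterates of the first-return components $\widetilde F_{a,v,c},\widetilde G_{a,v,c}$, which is possible because every pre-renormalization is itself a first-return construction at a deeper level. Since these pieces are orientation-preserving by Proposition \ref{homeo} and the hypothesis $\bar v\in\cC_{a,v}$ gives $\nabla_{\bar v}\widetilde F>0$ and $\nabla_{\bar v}\widetilde G>0$ pointwise, an inductive application of \eqref{eqn-comp2} yields
$$\inf_z\nabla_{\bar v}\Phi_k(z)\ \ge\ m(\bar v):=\min\bigl(\inf\nabla_{\bar v}\widetilde F,\;\inf\nabla_{\bar v}\widetilde G\bigr)\ >\ 0,$$
uniformly in $k$, and analogously $\nabla_{\bar v}\Lambda_k\ge m(\bar v)$. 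Iterating Proposition \ref{rab} then produces the geometric decay $|\Lambda_k|^{-1}\ge(1+\delta)^k$ for some $\delta>0$ uniform in $(a,v)$ and $k$, which is the source of the exponential factor in the conclusion.

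Finally, differentiating the identity $A_{k,c}=\Phi_k(0)/|\Lambda_k|$ and using $\Phi_k(0)=A_{k,c}|\Lambda_k|$, I would obtain
$$\nabla_{\bar v}A_{k,c}(a,v)\ =\ \frac{1}{|\Lambda_k|}\bigl(\nabla_{\bar v}\Phi_k(0)\,-\,A_{k,c}\cdot\nabla_{\bar v}|\Lambda_k|\bigr).$$
The prefactor $|\Lambda_k|^{-1}$ delivers the claimed $(1+\delta)^k$ growth; since $A_{k,c}\in(0,c]$ is uniformly bounded in $\Delta_c$, it suffices to bound the parenthesized quantity below by a positive constant $C(\bar v)$, which reduces to showing that $\nabla_{\bar v}\Phi_k(0)$ strictly dominates $\nabla_{\bar v}|\Lambda_k|$ by an amount bounded below in $k$.

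The hard part will be exactly this last cancellation. Both $\nabla_{\bar v}\Phi_k(0)$ and $\nabla_{\bar v}|\Lambda_k|$ are positive and bounded below by $m(\bar v)$, but they are not \emph{a priori} separated, and naive use of \eqref{eqn-comp2} only preserves positivity without quantifying the gap. A refined use of the full chain rule \eqref{eqn-comp} is needed to track the multiplicative factors $F'\circ(\cdots)$ and $G'\circ(\cdots)$ introduced at each composition step, and one would exploit that $\Phi_k$ contains strictly more long-iterate factors than $\Lambda_k$ does (coming from the terminal height $r_{2k-1}$ of the pre-renormalization), so that for $k$ sufficiently large the strict domination becomes quantitative. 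The restriction $c\in(0.5,2)$ enters only implicitly, through the nonemptiness of the cone field $\cC$ guaranteed by Propositions \ref{a-monotone-1}--\ref{monotone-4a}.
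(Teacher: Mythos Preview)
Your setup is the same as the paper's, and the ingredients you list (the composition estimate \eqref{eqn-comp2}, the \emph{a priori} bound of Proposition~\ref{rab}, and the quotient formula $A_{k,c}=\Phi_k(0)/|\Lambda_k|$) are exactly the ones used there. But the ``hard part'' you identify is a phantom, created by a sign slip.

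You correctly note that $\Lambda_k<0$ is an iterate value and that $\nabla_{\bar v}\Lambda_k\ge m(\bar v)>0$ by the cone condition and \eqref{eqn-comp2}. But then $|\Lambda_k|=-\Lambda_k$, so
\[
\nabla_{\bar v}|\Lambda_k|\;=\;-\,\nabla_{\bar v}\Lambda_k\;\le\;-m(\bar v)\;<\;0,
\]
not $>0$. (This is precisely the paper's item (d): the rescaling length \emph{decreases} along the cone direction.) Plugging this sign into your own formula,
\[
\nabla_{\bar v}A_{k,c}(a,v)\;=\;\frac{1}{|\Lambda_k|}\Bigl(\nabla_{\bar v}\Phi_k(0)\;-\;A_{k,c}\cdot\nabla_{\bar v}|\Lambda_k|\Bigr),
\]
both summands in the bracket are \emph{positive} (recall $A_{k,c}>0$), so there is no cancellation to control. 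Dropping the second term and using $\nabla_{\bar v}\Phi_k(0)\ge m(\bar v)$ and $|\Lambda_k|^{-1}\ge (1+\delta)^k$ gives
\[
\nabla_{\bar v}A_{k,c}(a,v)\;\ge\;\frac{m(\bar v)}{|\Lambda_k|}\;\ge\;m(\bar v)\,(1+\delta)^k,
\]
which is the conclusion with $C=m(\bar v)$. This is exactly the paper's argument; no refined chain-rule tracking or comparison of ``long-iterate factors'' is needed.
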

\begin{proof}
Let $\bar v=(\alpha,\nu)\in \cC_{a,v}$. Consider a smooth deformation 
\begin{equation}
\label{deform}
\zeta_t^{\bar v}=(F_{a+\alpha t,v+\nu t,c},G_{a+\alpha t,v+\nu t,c})\equiv (F_t,G_t).
\end{equation}
For $m\in\NN$ let us denote  $$\cT^{m}\zeta_t^{\bar v}\equiv (F^t_{m},G^t_m)\text{, and }p\cT^m\zeta_t^{\bar v}\equiv(H^t_m,K^t_m).$$
Let $$\lambda_m^t\equiv |K^t_m(0)|.$$
An easy induction shows that 
\begin{itemize}
\item[(a)] $F^t_{k}(x)=\frac{1}{\lambda_{k}^t} H^t_k\circ (\lambda_{k}^t x);$
\item[(b)] $H_k^t(0)>0$.
\end{itemize}
A repeated application of (\ref{eqn-comp}) 
implies that 
\begin{itemize}
\item[(c)] $\frac{\partial}{\partial t}H^t_{k}(x)>\eps$ where $\eps=\inf_{x\in[0,a]}\nabla_{\bar v}F(x)$;
\item[(d)] $\frac{d}{d t}\lambda_{k}^t<0$.
\end{itemize}
We calculate:
$$\frac{\partial}{\partial t}\left( \frac{1}{\lambda_k^t}H_k^t(\lambda_k^t x) \right)=-\frac{\frac {d}{dt}\lambda_k^t}{(\lambda_k^t)^2}H^t_k(\lambda_k^t x)+\frac{1}{\lambda_k^t}\left(\frac{\partial H_k^t(\lambda_k^t x)}{\partial t}+\frac{\partial H_k^t(x)}{\partial x}\frac{d \lambda_k^t}{d t}x\right).$$
Substituting $x=0$ and using $(a)-(d)$ we see that  
$$\left. \frac{\partial}{\partial t}\right| _{t=0}\left. \left( \frac{1}{\lambda_k^t}H_k^t(\lambda_k^t x) \right)\right| _{x=0}=\nabla_{\bar v}A_{k,c}(a,v)\geq \frac{1}{\lambda_k^0}\eps.$$
Using the real {\it a priori} bound from \propref{rab} completes the proof.
\end{proof}

%%%%%%%%%%%%%
\ignore{
%%%%%%%%%%%
\begin{prop}
\label{invariant cone}
The cone field $\cC$ is $\cT_c$-invariant:
\begin{itemize}
\item[(a)] $D\cT_c:\cC\to \cC.$
\end{itemize}
Moreover, let $\bar v\neq 0$ belong to the closure of the cone $\cC_{a,v}$. Then 
\begin{itemize}
\item[(b)] $D\cT|_{(a,v)}\bar v\in \cC_{a,v}.$
\end{itemize}
Further, there
exist $k\in\NN$, $\eps>0$ independent of $a$, $v$ such that the following holds. Denote
$$\hat \zeta=(\hat F,\hat G)\equiv \wl{\cT^{2k}\zeta_{a,v,c}},\text{ and }\bar w=D\cT^{2k}|_{(a,v)}\bar v.$$ Then,
\begin{itemize}
\item[(c)]$\min(\inf_x\grad_{\bar w}\hat F,\inf_x\grad_{\bar w}\hat F)>(1+\eps) \min(\inf_x\grad_{\bar v}\wl F_{a,v,c},\inf_x\grad_{\bar v}\wl G_{a,v,c})$
\end{itemize}

\end{prop}

\begin{proof}
Let $\bar v=(\alpha,\nu)\in \cC_{a,v}$. Consider a smooth deformation 
\begin{equation}
\label{deform}
\zeta_t^{\bar v}=(F_{a+\alpha t,v+\nu t,c},G_{a+\alpha t,v+\nu t,c})\equiv (F_t,G_t).
\end{equation}
Set 
$$\wl F_t\equiv F_t|_{[-1,F_t^{-1}(0)]},\; \wl G_t\equiv G_t\circ F_t|_{[F_t^{-1}(0),0]}.$$

For small $t$ the heights $r_0,r_1$ of $\zeta_t^{\bar v}$ and $\cR_c\zeta_t^{\bar v}$ are constant.
Set 
$$\lambda(t)=F_t^{r_0}(-1)<0.$$
From (\ref{def-r2}), both maps in the pair $\cT_c\zeta_t^{\bar v}$ have the form
\begin{equation}
\label{derivative1}
M_t(z)=-\frac{1}{\lambda(t)}H_t(-\lambda(t)z),
\end{equation}
where $H_t$ is a composition of maps $\wl F_t$, $\wl G_t$, and $z\leq 0$.
Set $H(t,z)\equiv H_t(z)$ and $\hat H(t,z)\equiv H_t(-\lambda(t)z)$. 
We have 
$$\frac{\partial}{\partial t}\hat H(t,z)=\frac{\partial H}{\partial t}(t,-\lambda(t) z)-\frac{\partial H}{\partial z}(t,-\lambda(t) z)\cdot \lambda'(t)z.$$
Note, that $\frac{\partial H}{\partial z}>0$ since $H$ is an orientation-preserving homeomorphism, and $\lambda'(t)>0$ by our assumptions.
Since $z\leq 0$, the second term in the above sum is positive.
Furthermore, by (\ref{eqn-comp}), the first term is bounded below by 
$$\min(\inf_{x\in[-1,F_{a,v,c}^{-1}(0)]}\nabla_{\bar v}\wl F_{a,v,c},\inf_{x\in[F_{a,v,c}^{-1}(0),0]}\nabla_{\bar v}\wl G_{a,v,c})>0.$$

Finally, 
\begin{equation}
\label{derivative2}
\begin{split}
\frac{\partial}{\partial t}
M_t(z) &=
\frac{\partial}{\partial t}-\frac{1}{\lambda(t)}H_t(-\lambda(t)z)=\\
&= -\frac{1}{\lambda(t)}\frac{\partial}{\partial t}H_t(-\lambda(t)z)-\frac{\lambda'(t)}{(\lambda(t))^2}H_t(-\lambda(t)z).
\end{split}
\end{equation}
Since  $H_t<0$, the second term in (\ref{derivative2}) is positive. Combining this with the previous estimate, we have
\begin{equation}
\label{derivative3}
\frac{\partial}{\partial t}
M_t(z) \geq -\frac{1}{\lambda(t)}\min(\inf_{x\in[-1,F_{a,v,c}^{-1}(0)]}\nabla_{\bar v}\wl F_{a,v,c},\inf_{x\in[F_{a,v,c}^{-1}(0),0]}\nabla_{\bar v}\wl G_{a,v,c})>0.
\end{equation}
Statement (c) follows immediately from (\ref{derivative3}) and \propref{rab}.

Finally, let $\bar v\in\bar \cC_{a,v}$ and $\bar v\neq 0$. 
Note that $s(x)=\nabla_{\bar v}F_{a,v,c}(x)$ is a rational function of $x$ of degree $2$.
Since $\bar v\in\bar \cC_{a,v}$, the function  
$$s(x)\equiv \grad_{\bar v}F_{a,v,c}(x)\geq 0\text{ for }x\in[-1,0].$$
 As $s(x)\not\equiv 0$,
 on the interval $[0,1]$ it may have no more than $2$ zeros.
The pigeonhole principle implies that there exists 
an interval $I^2_l\subset [0,1]$ of the
$2$-nd dynamical partition of $\zeta_{a,v,c}$, in which
$\inf_{x\in I^2_l} s(x)>0$. 
%%%Similar argument applies to $t(x)\equiv\nabla_{\bar v}G_{a,v,c}\circ F_{a,v,c}$,
By (\ref{eqn-comp}) this implies 
that 
$$\inf_z\frac{d}{dt}\wl{\cT(F_{t}, G_{t})}>0,$$
and claim (b) follows.
\end{proof}

We further show:
\begin{prop}
\label{uniform expansion}
There exists $k\in\NN$ and $\Omega>1$ such that the following holds. Let $(a,v)\in\cO^{4k}_c$ and let 
$\bar v\in\cC_{a,v}$. Denote 
$$\hat \zeta=(\hat F,\hat G)\equiv \cT^{2k}\zeta_{a,v,c},\text{ and }\bar w=D\cT^{2k}|_{(a,v)}\bar v.$$ 
Then
$$\sup_x\grad_{\bar w}\hat F>\Omega \max(\sup_x\grad_{\bar v}F_{a,v,c},\sup_x\grad_{\bar v}F_{a,v,c}\circ G_{a,v,c}).$$
\end{prop}
\begin{proof}
For simplicity sake, let us assume that 
$$\sup_x\grad_{\bar v}F_{a,v,c}\geq \sup_x\grad_{\bar v}F_{a,v,c}\circ G_{a,v,c},$$
the other possibility is handled similarly. Let 
$$x_0\equiv \operatorname{argmax}\grad_{\bar v}F_{a,v,c}.$$
Fix $m\in\NN$ and let 
$$\zeta_{2m}=(F_{2m},G_{2m})\equiv \cT^{2m}\zeta_{a,v,c}.$$
Let $$I_{2m}=[x_{2m},0]\subset [-1,0]$$ be the renormalization interval with period $q_{4m}$, so that
$$F_{2m}\equiv \alpha_{2m}\circ \zeta^{q_{4m}}_{a,v,c}|_{I_m}\circ \alpha_{2m}^{-1},\text{ where }\alpha_{2m}(x)=x/|x_{2m}|.$$
Set $I^j\equiv \zeta^i_{a,v,c}(I_{2m})$, and let $I^{j_0}\ni x_0.$
We will decompose the iterate $\zeta^{4q_m}_{a,v,c}|_{I_m}$ into
$$\zeta_{a,v,c}^{j_0}|_{I_{2m}}\circ F_{a,v,c}|_{I^{j_0}}\circ \zeta_{a,v,c}^{q_{4m}-j_0-1}|_{I^{j_0+1}}.$$
The same considerations as in the previous lemma imply that
$$\inf_x\grad_{\bar v}\zeta_{a,v,c}^{j_0}|_{I_{2m}}>0,\; \inf_x\grad_{\bar v}\zeta_{a,v,c}^{q_{4m}-j_0-1}|_{I^{j_0+1}}>0\text{, and }\grad_{\bar v}|x_{2m}|<0.$$
Let $y_0\in I_{2m}$ be such that $\zeta_{a,v,c}^{j_0}(y_0)=x_0$. A simple induction based on (\ref{eqn-comp}) implies that
$$D\zeta_{a,v,c}^{q_{4m}}(y_0)\sup_x\grad_{\bar w}\hat F>\frac{1}{|x_{2m}|}\sup_x\grad_{\bar v}F_{a,v,c}.$$
By real {\it a priori} bounds the derivative $D\zeta_{a,v,c}^{q_{4m}}(y_0)$ 
is universally bounded from below. 
Hence, there exists a universal constant $C>0$ such that 
$$\sup_x\grad_{\bar w}\hat F>\frac{C}{|x_{2m}|}\sup_x\grad_{\bar v}F_{a,v,c}.$$
By real {\it a priori} bounds, there exist $C_0>0$ and  $\lambda>1$ such that
$$\frac{1}{|x_{2m}|}>C_0\lambda^m,$$
and the claim follows.

\end{proof}

For a fixed $c$ let us denote 
$$\dist^0((a,v),(a',v'))\equiv\sup_{x\in[-1,0]}|F_{a,v,c}(x)-F_{a',v',c}(x)|.$$
The following is elementary:
\begin{prop}
The distance defined by $\dist^0$ is a smooth Riemannian metric on $\cD_c$.
\end{prop} 

\noindent
We will denote by $\cD_c^0$ the domain $\cD_c$ equipped by this smooth metric. For a vector $\bar v\in T_{(a,v)}\cD_c$ we denote
$||\bar v||^0$ the corresponding norm. 
We note the following corollary of \propref{uniform expansion}:
\begin{cor}
\label{uniform expansion 2}
There exist $k\in\NN$ and $\Omega>1$ such that the following holds. Let $(a,v)\in\cO^{4k}_c$ and let 
$\bar v\in\cC_{a,v}$. Then
$$||D\cT^{2k}(\bar v)||^0>\Omega ||\bar v||^0.$$

\end{cor}
Since any two smooth Riemannian metrics on $\cD_c$ induce equivalent norms on compact subsets, we have:
\begin{cor}
\label{uniform expansion 3}
There exist $k\in\NN$, $C>0$ and $\Omega>1$ such that the following holds. Let $(a,v)\in\cO^{4km}_c$ and let 
$\bar v\in\cC_{a,v}$. Then
$$||D\cT^{2km}(\bar v)||>C\Omega^m ||\bar v||.$$
\end{cor}
%%%%%%%%%%%%%%%%%%%%%%%%%%%%%%%%%
}%%%%%%%%%%%%%%%%%%end of ignore
%%%%%%%%%%%%%%%%%%%%%%%%%%%%%%%%%%

%%Recall the function $A_{k,c}(a,v)$ defined by (\ref{eq-av}) to be the $a$-value of $\cT^k_c(a,v)$. 
%%We have the following corollary of \propref{invariant cone}:

%%\begin{prop}
%%\label{expansion-of-a}
%%There exists $\eps>0$ and $k_0\in\NN$ such that for every $c\neq 1$, every $l\in\NN$,  every $(a,v)\in\cO_c^{2k_0l}$,
%%and every $\bar v\in\cC_{a,v}$ with $||\bar v||\geq 1$ we have:
%%$$\grad_{\bar v}A_{k_0l,c}(a,v)>(1+\eps)^l.$$
%%\end{prop}
%%\begin{proof}
%%The claim follows by a repeated application of part (c) of \propref{invariant cone}.
%%\end{proof}

\section{Constructing the renormalization horseshoe}

Let us begin by making the following definition. For a finite sequence of natural numbers $r_{-2k},\ldots,r_{-1},r_0,r_1,\ldots,r_{2m}$ we set $$\cS_{c,(r_{-2k},\ldots,r_{-1},r_0,r_1,\ldots,r_{2m})}\subset \Delta_c$$ to be the set of parameters 
$(a,v)\in \cO^{2m}_c\cap \cT^{2k}(\cO^{2k})$ where
$$(a,v)=\cT^{2k}(a_{-2k},v_{-2k})\text{ such that }\rho(\zeta_{a_{-2k},v_{-2k},c})=[r_{-2k},\ldots,r_0,\ldots,r_{2m},\ldots].$$

%The following is evident by continuous dependence of the rotation number on the parameters:
%\begin{prop}
%\label{connected box}
%Every set $\cS_{c,(r_{-2k},\ldots,r_{-1},r_0,r_1,\ldots,r_{2m})}$ is connected.
%\end{prop}

%%The next proposition follows from \propref{expansion-of-a}:
%%\begin{prop}
%%\label{small height}
%%There exists $\eps>0$ and $k_0\in\NN$ such that the following holds. Let 
%%$$\bar\gamma(t)\equiv(a(t),v(t)):(0,1)\to \Delta_c$$ be a smooth curve with the
%%property
%%$$\frac{d}{dt}\bar \gamma(t)\in\cC_{a(t),v(t)}\text{ for all }t.$$
%%Then for every $l\in\NN$ and every set $\cS\equiv \cS_{c,(r_0,r_1,\ldots,r_{2k_0 l})}$ we have:
%%\begin{itemize}
%%\item[(I)] the intersection $\bar \gamma(t)\cap  \cS$ is connected;
%%\item[(II)] the length of the intersection $\bar \gamma(t)\cap  \cS$ is less than $(1-\eps)^l$.
%%\end{itemize}

%%\end{prop}
%%\begin{proof}
%%The property (I) is an immediate corollary of \propref{rho-monotone}. The property (II) follows from \propref{expansion-of-a}.
%%\end{proof}

For every inifnite sequence of natural numbers $(r_0,r_1,\ldots)$ let us denote 
$$L_{c,(r_0,r_1,\ldots)}=\cap_{k\to\infty}\cS_{c,(r_0,r_1,\ldots,r_{2k})}\subset \Delta_c,\text{ so that }$$
$$L_{c,(r_0,r_1,\ldots)}=\{(a,v)\in\Delta_c\text{ such that }\rho(\zeta_{a,v,c})=[r_0,r_1,r_2,\ldots]\in\RR\setminus\QQ\}.$$
%%As a corollary of \propref{connected box} and \propref{small height}, we have:
We naturally identify the tangent space $T_{a,v}\Delta_c$ with $\RR^2$ by 
$$\iota:x\frac{\partial }{\partial a}+y\frac{\partial }{\partial v}\mapsto (x,y).$$
\begin{prop}
\label{curve1} Let $c\in(0.5,2)\setminus\{ 1\}$.
Then every set $L_{c,(r_0,r_1,\ldots)}$ is a continuous curve. 
Furthermore, for every point $(a,v)\in L_{c,(r_0,r_1,\ldots)}$ and every vector $\bar v\in \cC_{a,v}$,
 the curve is locally disjoint from the line segment $\iota(\bar v)\setminus \{(a,v)\}$.
%% the curve the 
%%curve is locally contained in the complement $\RR^2\setminus \iota(\cC_{a,v})$.
\end{prop}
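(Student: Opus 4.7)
The plan is to exhibit $L_{c,(r_0,r_1,\ldots)}$ locally as a continuous graph over a segment transverse to any $\bar v\in \cC_{a,v}$, using the rotation number itself as the defining coordinate. The essential tools are \propref{rho-monotone} (monotonicity of $\rho$ along curves tangent to $\cC$, strict at irrational values), the openness of the cone field (\propref{properties cone}), and the non-emptiness of $\cC_{a,v}$ on $\cir{\Delta}_c$ under the hypothesis $c\in(0.5,2)\setminus\{1\}$, furnished by Propositions \ref{a-monotone-1} and \ref{monotone-4}.

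First I would dispose of the second claim (local disjointness) directly. Given $(a_0,v_0)\in L_{c,(r_0,r_1,\ldots)}$ and $\bar v\in \cC_{a_0,v_0}$, the openness of the cone combined with continuous dependence of $F_{a,v,c},G_{a,v,c}$ on $(a,v)$ in the $C^1$ topology implies that $\bar v$ remains in $\cC_{(a_0,v_0)+t\iota(\bar v)}$ for all sufficiently small $|t|$. Setting $\bar\gamma(t)=(a_0,v_0)+t\iota(\bar v)$, \propref{rho-monotone} applies, and since $\rho(\bar\gamma(0))=[r_0,r_1,\ldots]\notin\QQ$, the function $t\mapsto \rho(\zeta_{\bar\gamma(t),c})$ is strictly increasing at $t=0$. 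Hence $\bar\gamma(t)\notin L_{c,(r_0,r_1,\ldots)}$ for $0<|t|$ small, yielding the desired local disjointness.

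For the first claim, I would produce a local continuous parametrization of $L_{c,(r_0,r_1,\ldots)}$ near $(a_0,v_0)$. Choose a vector $\bar w$ linearly independent of $\bar v$ and introduce local coordinates $\Phi(s,t)=(a_0,v_0)+s\iota(\bar w)+t\iota(\bar v)$ on a small square $(-\delta,\delta)^2$; by the same openness argument $\bar v\in \cC_{\Phi(s,t)}$ throughout. For each fixed $s$ the function $t\mapsto \rho(\zeta_{\Phi(s,t),c})$ is then non-decreasing and strictly increasing at any irrational value. At $s=0$ it strictly crosses the irrational value $\rho_0=[r_0,r_1,\ldots]$, so joint continuity of $\rho$ in $(s,t)$ together with the intermediate value theorem produce, for all small $|s|$, a value $t(s)$ with $\rho(\Phi(s,t(s)))=\rho_0$, uniquely determined by strict monotonicity at irrational values. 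Continuity of $s\mapsto t(s)$ then follows from a standard limit-uniqueness argument: any subsequential limit $t^*$ of $t(s_n)$ as $s_n\to s_0$ satisfies $\rho(\Phi(s_0,t^*))=\rho_0$ and must equal $t(s_0)$. Patching these local graphs across $L_{c,(r_0,r_1,\ldots)}$ yields the global continuous curve.

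The main technical point I anticipate is guaranteeing that the cone $\cC_{a,v}$ is actually non-empty at every point of $L_{c,(r_0,r_1,\ldots)}$, since Propositions \ref{a-monotone-1} and \ref{monotone-4} only produce vectors in $\cC_{a,v}$ for $(a,v)\in\cir{\Delta}_c$. This will be handled by observing that irrational rotation numbers force $L_{c,(r_0,r_1,\ldots)}\subset \cO^\infty_c$, which by \propref{nonrenorm} avoids the curved portion of $\partial\Delta_c$; any remaining boundary points on the linear strata of $\partial\Delta_c$ admit cone vectors by a routine continuity extension of the explicit computations in Propositions \ref{a-monotone-1} and \ref{monotone-4}.
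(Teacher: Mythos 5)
Your treatment of the second claim matches the paper's in substance: both rest on the openness of the cone field and on \propref{rho-monotone}'s strict monotonicity of $\rho$ at irrational values, with the paper phrasing it as a contradiction between two points of the same level set rather than as strict increase at $t=0$.

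For the first claim there is a genuine gap. What you establish is that $L_{c,(r_0,r_1,\ldots)}$ is \emph{locally} a continuous graph, hence a topological one-manifold; but the closing phrase ``patching these local graphs yields the global continuous curve'' does not address whether $L_{c,(r_0,r_1,\ldots)}$ is \emph{connected}. A priori a level set of the rotation number could consist of several disjoint arcs, and a purely local argument cannot distinguish a single curve from a disjoint union of them. The paper closes exactly this hole by making your IVT step global: it fixes one transversal direction for \emph{all} base points at once --- $\frac{\partial}{\partial a}$ for $c>1$ (\propref{a-monotone-1}) and $a\frac{\partial}{\partial a}+c\frac{\partial}{\partial v}$ for $c<1$ (\propref{monotone-4}) --- so that the lines $\{v=v_0\}$ (resp.\ the curves $\{a=a_0\exp(v/c)\}$) are tangent to the cone field everywhere. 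Your monotonicity argument gives at most one intersection of $L_{c,(r_0,r_1,\ldots)}$ with each such line; the paper then observes that $\rho=0$ on one boundary arc of $\cD_c\setminus\cO^1_c$ and $\rho=1$ on the other, so the intermediate value theorem gives at least one intersection with every line $\{v=v_0\}$, $v_0>-1$. This exhibits $L_{c,(r_0,r_1,\ldots)}$ as the graph of a continuous function $v\mapsto\alpha(v)$ over a connected interval, hence as a single arc. To repair your argument on its own terms you would need to fix $\bar w$ globally, prove closedness of $L_{c,(r_0,r_1,\ldots)}$, and still invoke the boundary values of $\rho$ --- at which point you are reproducing the paper's proof. (Your worry about $\cC_{a,v}$ on $\partial\Delta_c$ is reasonable but slightly misdirected: \propref{nonrenorm} describes $\partial\cO^1_c$ inside $\cD_c$, not the boundary strata $\{v=0\}$, $\{v=c-1\}$ of $\Delta_c$; the paper quietly works in $\cD_c$ with the explicit formulas and does not separately address this.)
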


\begin{proof}
Let us start with the second assertion. Assume that for some $(a,v)\in L_{c,(r_0,r_1,\ldots)}$ there exists a point
$(a',v')\in L_{c,(r_0,r_1,\ldots)}\cap  \iota(\cC_{a,v})$. Denote $L$ the line segment connecting $(a,v)$ and $(a',v')$.
Consider a linear map $\gamma:[0,1]\to L$ with $\gamma(0)=(a,v)$ and $\gamma(1)=(a',v')$. 
Assuming that $(a',v')$ is sufficiently close to $(a,v)$, we have 
$$\frac{d}{dt}\bar \gamma(t)\in\cC_{a(t),v(t)}\text{ for every }t\in[0,1]$$
by considerations of continuity. Hence, by \propref{rho-monotone} we have 
$$\rho(\zeta_{a,v,c})<\rho(\zeta_{a',v',c}),$$
which contradicts our assumption.

Let us show that the set $L_{c,(r_0,r_1,\ldots)}$ is a continuous curve. 
We first present the argument for $c>1$. 
Note that for every $(a_0,v_0)\in\Delta_c$ the vertical line 
$$\{v=v_0\}\subset \iota(\cC_{a_0,v_0}).$$ Thus, the intersection
$$\alpha(v_0)\equiv L_{c,(r_0,r_1,\ldots)}\cap \{v=v_0\}$$
contains at most one point. 

Furthermore, 
consider the domain $\cD_c\setminus \cO^1_c$. On the upper boundary, $\{a=c\}$ we have 
$F_{a,v,c}(-1)=G_{a,v,c}(a)=0$. Hence, the rotation number 
$\rho(\zeta_{a,v,c})=[1,\infty]=1.$ On the other hand, on the lower boundary curve every $\zeta_{a,v,c}$ has a fixed point in $[-1,0]$,
which is either the boundary point $-1$, or a point $w\in(-1,0)$ with $F_{a,v,c}'(w)=1$. Thus the rotation number 
$\rho(\zeta_{a,v,c})=[\infty]=0$. Hence, by Intermediate Value Theorem, for $v>-1$ we have 
$$\alpha(v)\neq \emptyset.$$
Finally, by continuity in the dependence of the rotation number on parameters, the parametrization
$$v\mapsto \alpha(v)\in L_{c,(r_0,r_1,\ldots)}$$
is continuous.

The proof for $c\in(0.5,1)$ is completely analogous , with the vertical lines $\{v=v_0\}$ replaced 
by $\{a=a_0\exp(v/c)\}$. We leave the details to the reader.
\end{proof}

%%%%%%%%%%%%%%%%%%%%%%%%%%%%%%%%%%%%%%%%%%%%%%
\ignore{
%%%%%%%%%%%%%%%%%%%%%%%%%%%%%%%%%%%%%%%%%%%%%%%%

\begin{defn}
For a finite continued fraction $[r_0,\ldots,r_{4n}]=p/q$ let us denote 
$L_{c,[r_0,\ldots,r_{4n}]}$ the collection of parameters $(a,v)\in\cS_{c,(r_0,\ldots,r_{4n})}$ for which the critical point $0$ is
periodic, in other words, $f_{a,v,c}^q(s_a(0))=s_a(0)$.
\end{defn}

We prove:
\begin{prop}
\label{critical curves} Let $c>0.5$. Then each set $L_{c,[r_0,\ldots,r_{4n}]}$ is an analytic curve. Moreover, for every 
$(a,v)\in L_{c,[r_0,\ldots,r_{4n}]}$ the tangent line $\ell\subset T_{a,v}\cD_c$ to this curve lies outside of $\cC_{a,v}$.
\end{prop}
\begin{proof}
Let us present the argument for $c>1$, the proof for $c\in(0,5,1)$ being completely analogous. By \propref{rho-monotone}
for every $(a_0,v_0)\in\cD_c$ the set 
$$M_{v_0}\equiv \{(a,v_0)\in\cD_c\;|\; \rho(\zeta_{a,v_0,c})=[r_0,\ldots,r_{4n}]\}$$
is a closed interval $$[a_l(v_0),a_u(v_0)]\times\{ v_0\}.$$ Let $$\hat\zeta=(\hat F,\hat G)\equiv \cT^{2n}\zeta_{a,v,c}.$$
Since $\frac{\partial}{\partial a}\in \cC_{a,v}$, the critical value $\hat F(-1)=\hat G(0)$ strictly increases
as a function of $a$ inside the interval $M_{v_0}$, with $\hat F(-1)=0$ at the upper boundary point $(a_u(v_0),v_0)$  of $M_{v_0}$. Thus,
the intersection 
$$L_{c,[r_0,\ldots,r_{4n}]}\cap \{v=v_0\}=a_u(v_0).$$
By continuity of the rotation number, the function $a_u(v_0)$ is continuous. Finally, the analyticity of the condition
$f_{a,v,c}^q(s_a(0))=s_a(0)$ together with the Implicit Function Theorem imply that $a_u(v_0)$ is analytic.

\end{proof}

%%%%%%%%%%%%%%%%%%%%%%%%%%%%%%%%%%%%%%%%%%%%%%
}
%%%%%%%%%%%%%%%%%%%%%%%%%%%%%%%%%%%%%%%%%%%
We now use the Duality Theorem. 
For every backward-inifnite sequence of natural numbers $(\ldots,r_{-k},\ldots,r_{-2},r_{-1})$ let us denote 
$$L_{c,(\ldots,r_{-2},r_{-1})}=\cap_{k\to\infty}\cS_{c,(r_{-2k-1},\ldots,r_{-1})}\subset \Delta_c.$$
As a corollary of the Duality Theorem, we have:
\begin{prop}
\label{curve2}
Every $$L_{1/c,(\ldots,r_{-2},r_{-1})}=\cI_c(L_{c,(r_{-1},r_{-2},\ldots)}).$$ 
In particular, every $L_{1/c,(\ldots,r_{-2},r_{-1})}$ is a continuous curve. 
\end{prop}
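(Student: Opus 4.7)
The plan is to deduce both assertions directly from the Duality Theorem by transporting Proposition~\ref{curve1} through the analytic involution $\cI_c\colon \cD_c\to \cD_{1/c}$. The key is that $\cI_c$ sets up a bijection between ``forward itineraries'' on the $c$-side and ``backward itineraries'' on the $1/c$-side, with matching sequences of heights.

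First I would rewrite equation~(\ref{t conj}) as $\cT_c = \cI_{1/c}\circ \cT_{1/c}^{-1}\circ \cI_c$, use $\cI_{1/c}=\cI_c^{-1}$, and iterate it to obtain the conjugation
$$\cI_c\circ \cT_c^n = \cT_{1/c}^{-n}\circ \cI_c$$
on the relevant domain. Setting $(a',v')\equiv \cI_c(a,v)$, this identity says that the forward $\cT_c$-orbit of $(a,v)$ is carried by $\cI_c$ precisely onto the backward $\cT_{1/c}$-orbit of $(a',v')$. The height-preservation clause of the Duality Theorem then guarantees that, under this conjugation, each $\cR_c$-step along the forward itinerary inherits the height of the corresponding $\cR_{1/c}$-step along the backward itinerary, read in the same temporal order.

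With this dictionary in place, I would unpack the nested intersections defining both $L$-sets. A point $(a,v)$ satisfies $\rho(\zeta_{a,v,c}) = [r_{-1},r_{-2},r_{-3},\ldots]$ exactly when, for every $k$, the image $\cI_c(a,v)$ admits a length-$k$ $\cT_{1/c}$-preimage chain whose heights are $r_{-2k-1},\ldots,r_{-2},r_{-1}$, which is exactly the condition $\cI_c(a,v)\in \cS_{1/c,(r_{-2k-1},\ldots,r_{-1})}$. Intersecting over all $k$ yields $\cI_c\bigl(L_{c,(r_{-1},r_{-2},\ldots)}\bigr) = L_{1/c,(\ldots,r_{-2},r_{-1})}$, which is the first assertion. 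The continuity claim is then immediate: by Proposition~\ref{curve1} the source set is a continuous curve in $\Delta_c$, and $\cI_c$ is a real-analytic diffeomorphism with strictly positive Jacobian on $\cD_c$, so its image is a continuous curve in $\Delta_{1/c}$.

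The main obstacle I anticipate is purely bookkeeping: each $\cT$-step consumes two consecutive heights, the involution $\cI_c$ reverses time, and the notational conventions for the finite strings labelling $L_{1/c,(\ldots)}$ and $\cS_{1/c,(\ldots)}$ are written in reversed order relative to the forward continued fraction of the conjugate pair, so parities and endpoints must be aligned consistently at each level $k$ for the nested-intersection identification to go through cleanly.
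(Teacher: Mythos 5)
Your argument is correct and is exactly the route the paper intends: the paper offers no proof beyond invoking the Duality Theorem, and your elaboration via the iterated conjugacy $\cI_c\circ\cT_c^n=\cT_{1/c}^{-n}\circ\cI_c$ together with the height-preservation clause is precisely the implicit content of that citation. The final continuity claim then follows, as you say, from Proposition~\ref{curve1} and the fact that $\cI_c$ is an analytic diffeomorphism.
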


We are now ready to show:
\begin{thm}
\label{attractor4}
Let $c\in(0.5,2)\setminus\{ 1\}$, and let $(r_i)_{i\in\ZZ}$ be a periodic sequence of positive integers with period
$2p,\; p\in\NN$. Then there exists a unique $p$-periodic  point  $(a_*,v_*)$ of $\cT_c$ with the property
$$\rho(\zeta_{a_*,v_*,c})=[r_0,r_1,\ldots,r_{2p-1},\ldots]\equiv \rho.$$
Furthermore, the orbit $(a_*,v_*)$ is hyperbolic with one stable and one unstable directions.

Finally, the curves $$L_{c,(r_0,r_1,\ldots)}=W^s(a_*,v_*),\text{ and }L_{c,(...,r_{-k},\ldots,r_{-1})}=W^u(a_*,v_*).$$

\end{thm}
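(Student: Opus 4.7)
The plan is to combine the cone expansion from Proposition~\ref{expcone1} with the Duality Theorem and the continuous curves $L_{c,(r_0,r_1,\ldots)}$, $L_{c,(\ldots,r_{-2},r_{-1})}$ already built in Propositions~\ref{curve1}--\ref{curve2}. Extend $(r_i)_{i\in\ZZ}$ $2p$-periodically and set $L^s := L_{c,(r_0,r_1,\ldots)}$, $L^u := L_{c,(\ldots,r_{-2},r_{-1})}$. The definitions of $L^s$ via forward itineraries and of $L^u$ via backward itineraries, combined with the $2p$-periodicity of $(r_i)$ and the shift interpretation of $\cT_c$ on itineraries, give $\cT_c^p(L^s)\subseteq L^s$ and $\cT_c^{-p}(L^u)\subseteq L^u$; hence any point of $L^s\cap L^u$ is automatically a $\cT_c^p$-fixed point with rotation number $\rho=[r_0,r_1,\ldots]$.

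The first task is to show that $L^s$ and $L^u$ meet in exactly one point. Transversality at any intersection is a cone check: by Proposition~\ref{curve1} the tangent to $L^s$ avoids the open cone $\cC$, while applying Proposition~\ref{curve2} together with Proposition~\ref{curve1} at parameter $1/c$ (this is where $c\in(0.5,2)$ is used, so that both $c$ and $1/c$ lie in the range where Propositions~\ref{a-monotone-1} and~\ref{monotone-4} provide non-trivial cone directions) and then differentiating the explicit involution $\cI_c$ shows that the tangent to $L^u$ lies inside $\cC$. Existence of at least one intersection is the main technical obstacle; I would mimic the Intermediate Value argument from the proof of Proposition~\ref{curve1} to show that both curves extend across $\Delta_c$ between the $\rho=0$ and $\rho=1$ boundary components of $\cO^1_c$ described in Proposition~\ref{nonrenorm}, after which cone transversality forces a crossing. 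Uniqueness then follows since two intersection points would contradict the strict monotonicity of rotation number along any curve tangent to $\cC$ granted by Proposition~\ref{rho-monotone}.

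Hyperbolicity at the intersection point $(a_*,v_*)$ is the cleanest step. By Proposition~\ref{expcone1}, for some $k_0\in\NN$ the derivative $D\cT_c^{pk_0}|_{(a_*,v_*)}$ strictly maps $\cC_{(a_*,v_*)}$ into its interior and expands vectors there by a factor $>1$, producing a real eigenvalue $\lambda_u>1$ with eigenline $E^u\subset\cC_{(a_*,v_*)}$. For the second eigenvalue I invoke duality: the identity $\cT_c^{-p}=\cI_c^{-1}\circ\cT_{1/c}^p\circ\cI_c$ shows that $\cI_c(a_*,v_*)\in\Delta_{1/c}$ is a fixed point of $\cT_{1/c}^p$, and $D\cT_c^{-pk_0}$ at $(a_*,v_*)$ is conjugate via $D\cI_c$ to $D\cT_{1/c}^{pk_0}$ at $\cI_c(a_*,v_*)$. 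Applying Proposition~\ref{expcone1} at parameter $1/c$ yields an eigenvalue $>1$ for the latter map; conjugating back produces an eigenvalue $\lambda_s$ of $D\cT_c^{pk_0}|_{(a_*,v_*)}$ with $|\lambda_s|<1$. Since the two eigenvalues have distinct moduli they are automatically real, and the required invariant splitting $E^u\oplus E^s$ is established.

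Finally, to identify $L^s$ with $W^s(a_*,v_*)$, observe that $E^s$ lies outside $\cC$ (the unique $D\cT_c^{pk_0}$-invariant projective direction inside $\cC$ is $E^u$), while by Proposition~\ref{curve1} the tangent to $L^s$ at $(a_*,v_*)$ is also outside $\cC$; since both are $\cT_c^p$-invariant curves through $(a_*,v_*)$ with tangent direction out of the cone, they agree locally, and by $\cT_c^p$-invariance they agree globally. The identity $L^u=W^u(a_*,v_*)$ is the mirror statement, obtained by running the same argument after passing through $\cI_c$ to the $1/c$-side. The main difficulty I expect is the existence half of the intersection step: verifying rigorously that the globally-defined continuous curves $L^s$ and $L^u$ must cross somewhere in $\Delta_c$ requires a careful re-examination of the boundary behavior used in the proof of Proposition~\ref{curve1}, and in particular of how a backward itinerary constrains the curve $L^u$ near $\partial\cO^1_c$.
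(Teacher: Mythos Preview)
Your overall architecture differs from the paper's, and several of the substitute steps have genuine gaps.

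\textbf{Existence.} You try to produce the periodic point as an intersection $L^s\cap L^u$ and correctly flag the existence of such an intersection as the hard step. The paper bypasses this entirely: by Proposition~\ref{curve1}, $L^s$ is a graph over one coordinate, hence a topological arc; since $\cT_c^p$ maps $L^s$ continuously into itself and the image is trapped in the compact region $\Delta_c$, an elementary compactness/interval fixed-point argument produces a fixed point $(a_*,v_*)\in L^s$ directly. No analysis of $L^u$ or of boundary behaviour is needed.

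\textbf{The logical slip.} The claim ``any point of $L^s\cap L^u$ is automatically a $\cT_c^p$-fixed point'' does not follow from $\cT_c^p(L^s)\subseteq L^s$ and $\cT_c^{-p}(L^u)\subseteq L^u$; those only give forward invariance of $L^s\cap L^u$, not pointwise fixing. You would need uniqueness of the intersection first, so the order of your argument is circular as written.

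\textbf{Transversality and tangents.} At this stage $L^s$ and $L^u$ are only known to be \emph{continuous} curves (Propositions~\ref{curve1}--\ref{curve2}), so speaking of their tangent directions is premature. Even granting smoothness, the assertion that $D\cI_c$ sends the complement of $\cC_{\cI_c(a,v)}$ into $\cC_{a,v}$ is a nontrivial computation you have not carried out; the paper does not make or need this claim here (uniform transversality of the two foliations is quoted later, from \cite{KT2}, as Proposition~\ref{transverse}).

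\textbf{Identifying $L^s$ with $W^s$.} Your argument (``both are $\cT_c^p$-invariant with tangent outside $\cC$, hence coincide'') is insufficient: the complement of an open cone contains many directions, and two distinct invariant curves could both be tangent to it. The paper argues instead: $W^s_{\text{loc}}(a_*,v_*)\subset L^s$ because points on $W^s_{\text{loc}}$ have rotation number $\rho$; since $L^s$ is a graph over one coordinate, this inclusion forces local coincidence. If $W^s(a_*,v_*)\subsetneq L^s$, the endpoint would be a second hyperbolic $p$-periodic point whose local stable manifold overlaps $W^s(a_*,v_*)$ --- a contradiction. This simultaneously yields uniqueness of the periodic point.

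Your hyperbolicity step (one expanding eigenvalue from Proposition~\ref{expcone1}, one contracting eigenvalue by applying the same at parameter $1/c$ via the Duality Theorem, hence both real) is essentially the paper's argument.
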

\begin{proof}
Elementary considerations of  compactness and convergence imply that there exists a non-empty compact set 
$$\Omega\subset L_{c,(r_0,r_1,\ldots)}$$
such that $(\cT_c)^p(\Omega)=\Omega$, and moreover, for every $\zeta_{a,v,c}$ with $\rho(\zeta_{a,v,c})=\rho$
we have
$$(\cT^c)^{pk}(\zeta_{a,v,c})\underset{k\to\infty}{\longrightarrow}\Omega.$$
Since a continuous mapping of a closed interval always has a fixed point, 
there exists at least one $p$-periodic point in $\Omega$, let us denote it $(a_*,v_*)$.

By \propref{expcone1}, the matrix $D\cT_c^p|_{(a_*,v_*)}$ has one expanding eigenvalue. By the Duality Theorem, it has 
a contracting eigenvalue. Hence, $(a_*,v_*)$ is a hyperbolic periodic orbit. 

Let us prove that $L_{c,(r_0,r_1,\ldots)}=W^s(a_*,v_*)$ for $c\in(1,2)$. 
By continuity of the dependence of the rotation number on parameter,
$$W^s_{\text{loc}}(a_*,v_*)\subset L_{c,(r_0,r_1,\ldots)}.$$
Furthermore, both sets are continuous curves and hence
locally coincide. 

By construction of the renormalization operator, the global stable manifold
$W^s(a_*,v_*)$ is a smooth sub-arc of $L_{c,(r_0,r_1,\ldots)}.$
Suppose that $W^s(a_*,v_*)$  is not the whole curve $L_{c,(r_0,r_1,\ldots)}$, and thus $W^s(a_*,v_*)$ has an endpoint $(x,y)\notin W^s(a_*,v_*)$.
By invariance of $W^s(a_*,v_*)$ under $\cT^p$, the point $(x,y)$ is also a $p$-periodic point of $\cT$. The same argument as above implies that
it is hyperbolic and that $W^s_{\text{loc}}(x,y)$ is a smooth sub-arc in $L_{c,(r_0,r_1,\ldots)}$. Hence 
$$W^s_{\text{loc}}(x,y)\cap W^s(a_*,v_*)\neq \emptyset,$$
and we have arrived at a contradiction.

The argument for $c\in(0.5,1)$ is again completely analogous, with the lines $\{v=v_0\}$ replaced 
by curves $\{a=a_0\exp(v/c)\}$. 

As a consequence, 
$$\Omega=\{(a_*,v_*)\}.$$
Finally,
the statement $L_{c,(...,r_{-k},\ldots,r_{-1})}=W^u(a_*,v_*)$ follows by the Duality Theorem.

\end{proof}

Let us formulate a few corollaries:
\begin{prop}\label{attractor5}
For every periodic sequence $(r_i)_{i\in\ZZ}$, the curves
 $L_{c,(r_0,r_1,\ldots)}$ and $L_{c,(...,r_{-k},\ldots,r_{-1})} $ are $C^\omega$-smooth.
\end{prop}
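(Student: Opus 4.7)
The plan is to deduce analyticity from the hyperbolic analytic stable/unstable manifold theorem applied to the periodic orbit $(a_*,v_*)$ provided by \thmref{attractor4}. By that theorem, $L_{c,(r_0,r_1,\ldots)}=W^s(a_*,v_*)$ and $L_{c,(\ldots,r_{-2},r_{-1})}=W^u(a_*,v_*)$ for a hyperbolic $p$-periodic point $(a_*,v_*)$ of $\cT_c$. It therefore suffices to verify that some iterate of $\cT_c$ is real-analytic on a neighborhood of the orbit of $(a_*,v_*)$ and then to invoke the $C^\omega$ Hadamard-Perron theorem for hyperbolic fixed points of analytic diffeomorphisms.

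The first step is a local analyticity check. Recall from \S1 that $\cR_{c}$ is analytic on the interior of each stratum $\Pi_{k,c}$, and a similar statement holds for $\cR_{1/c}$; thus $\cT_c$ is analytic on the interior of each set of the form $\Pi_{k,c}\cap\cR_c^{-1}(\Pi_{m,1/c})$. Since the symbol sequence $(r_i)$ is $2p$-periodic, the heights $r_0,\ldots,r_{2p-1}$ visited under the orbit of $(a_*,v_*)$ are bounded, and each iterate $\cT_c^j(a_*,v_*)$ lies in the open interior of the relevant stratum (these points have irrational rotation number, hence sit away from the boundary loci $\{\rho=[r_0,\ldots,r_j,\infty]\}$ where heights jump). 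Consequently $\cT_c^{p}$ is a composition of finitely many analytic branches in a full open neighborhood of $(a_*,v_*)$, and $(a_*,v_*)$ is a hyperbolic fixed point for this analytic map. The analytic Hadamard-Perron theorem then produces local $C^\omega$-graphs $W^s_{\mathrm{loc}}(a_*,v_*)$ and $W^u_{\mathrm{loc}}(a_*,v_*)$, which by the argument used in the proof of \thmref{attractor4} must coincide with appropriate sub-arcs of $L_{c,(r_0,r_1,\ldots)}$ and $L_{c,(\ldots,r_{-1})}$ respectively.

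The second step is globalization. For the unstable direction, $W^u(a_*,v_*)=\bigcup_{n\geq 0}\cT_c^{pn}(W^u_{\mathrm{loc}}(a_*,v_*))$, which is a nested union of $C^\omega$-images of an analytic arc under an analytic map, hence itself $C^\omega$. For the stable direction, either one pulls back $W^s_{\mathrm{loc}}$ by the analytic local inverses of $\cT_c$ (which exist by injectivity from the Duality Theorem and hyperbolic invertibility along the orbit), or equivalently one invokes \eref{t conj} together with \propref{curve2}: the involution $\cI_c$ is real-analytic on $\cD_c$, and the analytic unstable manifold of the companion periodic point for $\cT_{1/c}$ maps to $L_{c,(r_0,r_1,\ldots)}$ under $\cI_{1/c}$, giving $C^\omega$-smoothness.

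The main obstacle is the first step: ensuring that the iterate $\cT_c^{p}$ really is analytic on a \emph{two-dimensional} neighborhood of $(a_*,v_*)$, rather than merely being an analytic restriction to one stratum. Once one checks that the periodic orbit lies in the interior of all the strata that define the composition, everything else is a formal application of standard hyperbolic theory. The analytic form of the stable manifold theorem is classical (see, e.g., the analytic Hadamard-Perron theorem), so no new machinery is needed beyond what has been developed in the previous sections.
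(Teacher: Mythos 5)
Your proposal is correct and takes essentially the same approach the paper intends: the paper states Proposition~\ref{attractor5} without proof as an immediate corollary of Theorem~\ref{attractor4}, relying on exactly the chain of ideas you spell out (analyticity of $\cT_c^p$ on the interior of the relevant strata, the analytic Hadamard--Perron theorem for the hyperbolic periodic orbit, and globalization by iteration and duality). You simply make explicit the details that the paper leaves to the reader.
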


The following statement follows from the results of \cite{KT2}:
\begin{prop}
\label{transverse}
For every periodic sequence 
$(r_i)_{i\in\ZZ}$, the curves
 $L_{c,(r_0,r_1,\ldots)}$ and $L_{c,(...,r_{-k},\ldots,r_{-1})} $ intersect uniformly transversely.
\end{prop}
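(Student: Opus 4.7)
By Theorem~\ref{attractor4}, for any periodic sequence $(r_i)_{i\in\ZZ}$ of period $2p$ the curves $L_{c,(r_0,r_1,\ldots)}$ and $L_{c,(\ldots,r_{-2},r_{-1})}$ coincide with the stable and unstable manifolds $W^s(a_*,v_*)$ and $W^u(a_*,v_*)$ of the hyperbolic $p$-periodic point $(a_*,v_*)\in\cA_c$. Transversality at $(a_*,v_*)$ is immediate from hyperbolicity; the content of the statement is that the transversality angle is bounded below \emph{uniformly} over all periodic sequences $(r_i)$.

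The plan is to trap the two tangent lines in cones that are uniformly disjoint. First I would show that tangent vectors to $L_{c,(\ldots,r_{-1})}=W^u(a_*,v_*)$ always lie in the open cone $\cC_{(a_*,v_*)}$, with uniform angular depth. The inclusion itself is a consequence of \propref{expcone1}: iterating $D\cT_c$ on any vector $\bar v\in\cC$ produces a vector whose directional derivative of $A_{k,c}$ grows exponentially, and the unstable direction is then the unique $D\cT_c$-invariant line in $\cC$. Combined with the fact that $D\cT_c$ maps $\bar\cC\setminus\{0\}$ strictly into $\cC$ — which one verifies by the same monotonicity arguments as in \propref{a-monotone-1}--\propref{monotone-4a} together with formula~\eref{eqn-comp} applied to the composition defining $\cT_c$ — this forces $E^u_{(a_*,v_*)}$ to sit in the interior of $\cC$ with a gap from $\partial\cC$ that is uniform on the compact attractor $\cA_c$.

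Next I would show that tangent vectors to $L_{c,(r_0,r_1,\ldots)}=W^s(a_*,v_*)$ lie in a complementary cone, uniformly away from $\bar\cC$. That the stable tangent is outside $\cC$ follows at once from \propref{rho-monotone}: any direction in $\cC$ strictly increases the (irrational) rotation number $\rho=[r_0,r_1,\ldots]$ at $(a_*,v_*)$, whereas $L_{c,(r_0,r_1,\ldots)}$ is a level curve of $\rho$. To upgrade this qualitative exclusion to a uniform one — which is what uniform transversality requires — I would invoke the quantitative geometric control on the curves $L_{c,\rho}$ established in \cite{KT2} (the same input already used in Remark~\ref{rem:bdedtype} and \propref{curve1} to pass from bounded type to the whole of $\cA_c$). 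That control yields a $C^1$-type bound placing the tangent line of every $L_{c,\rho}$ inside a cone that is uniformly disjoint from $\bar\cC$ on $\cA_c$.

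Combining the two uniform cone conditions with compactness of $\cA_c$ gives a lower bound on $\measuredangle(E^u_{(a_*,v_*)},E^s_{(a_*,v_*)})$ depending only on $c$, which is precisely uniform transversality. The main obstacle is the last step: the inclusion $E^u\in\cC$ is easy to make quantitative from the tools developed in this paper, but the uniform exclusion of $E^s$ from a neighborhood of $\bar\cC$ is not automatic and is exactly where the geometric estimates from \cite{KT2} enter essentially; everything else in the argument is an unpacking of the hyperbolicity already established in Theorem~\ref{attractor4}.
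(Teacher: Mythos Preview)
The paper gives no argument for this proposition beyond the sentence ``The following statement follows from the results of \cite{KT2}'', together with the remark that for bounded type the uniformity follows from compactness of $\cA_c^B$. Your proposal is considerably more detailed than anything the paper offers, and the overall strategy --- trap $E^u$ in the cone field $\cC$ and $E^s$ in a complementary cone, citing \cite{KT2} for the quantitative control on the stable curves --- is in the same spirit as the paper's one-line deferral to \cite{KT2}.

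There is, however, a genuine gap: you twice invoke ``compactness of $\cA_c$'', and $\cA_c$ is \emph{not} compact. By \propref{attractor2} it is homeomorphic to $\Sigma_\NN$, which fails to be compact since the heights $r_i$ are unbounded; correspondingly, points of $\cA_c$ can accumulate on the boundary of $\Delta_c$ as some $r_i\to\infty$. This is exactly why the paper separates the bounded-type case --- where $\cA_c^B$ \emph{is} compact and your cone argument would go through verbatim --- from the general case, which genuinely requires the estimates of \cite{KT2}. In particular, your claim that $E^u$ sits in $\cC$ with \emph{uniform} angular depth cannot be extracted from compactness alone; to repair this you must either transfer the \cite{KT2} control on the stable curves $L_{c,\rho}$ to the unstable curves via the Duality Theorem and $\cI_c$, or simply defer the uniformity of both directions to \cite{KT2}, as the paper does.
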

Note (see Remark \ref{rem:bdedtype}) that for orbits of bounded type the uniformity of transversality of intersection follows
by considerations of compactness, without appealing to \cite{KT2}.

\begin{prop}
\label{attractor1}
For every $c\in(0.5,2)\setminus \{1\}$ and every bi-infinite sequence of natural numbers
$(r_i)_{i\in\ZZ}$ there exists a unique point $(a,v)=(a,v)_{(r_i)_{i\in\ZZ}}\in\Delta_c$ such that:
\begin{itemize}
\item $\rho(\zeta_{a,v,c})=[r_0,r_1,\ldots]$;
%%\item $(a,v)\in\cap_{j\in\NN}\cO^j_c$;
\item for every $m\in\NN$ the rotation number 
$$\rho(\cT_c^{-m}(\zeta_{a,v,c}))=[r_{-2m},\ldots,r_0,r_1,\ldots].$$
\end{itemize}
\end{prop}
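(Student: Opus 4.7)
The plan is to obtain $(a,v)$ as a subsequential limit of periodic orbits supplied by \thmref{attractor4}, and to establish uniqueness via the expansion property of \propref{expcone1} together with the Duality Theorem.

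First, I would prove existence by periodic approximation. For each $N\in\NN$, let $(r_i^{(N)})_{i\in\ZZ}$ denote the $2N$-periodic extension of the block $(r_{-N+1},\ldots,r_N)$. By \thmref{attractor4} there is a unique $N$-periodic point $(a_N,v_N)\in\Delta_c$ of $\cT_c$ with rotation number $[r_0^{(N)},r_1^{(N)},\ldots]$. Since $\Delta_c$ is compact, I extract a convergent subsequence $(a_{N_j},v_{N_j})\to (a_*,v_*)\in\Delta_c$. For every fixed $m$ and every $N_j>2m$, the rotation number of $\cT_c^{-m}(a_{N_j},v_{N_j})$ has continued fraction expansion agreeing with $[r_{-2m},r_{-2m+1},\ldots]$ up to depth $2N_j-2m$. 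Passing to the limit along the subsequence, and using continuity of the rotation number and of $\cT_c^{-m}$ on the open combinatorial class where the $2m$ preceding heights are prescribed, I conclude $\rho(\cT_c^{-m}(a_*,v_*))=[r_{-2m},\ldots,r_0,r_1,\ldots]$ for every $m\in\NN$.

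Next, for uniqueness, I would show that the diameters of the closed cylinders $\cS_{c,(r_{-2k},\ldots,r_{2m})}$ shrink to zero as $k,m\to\infty$. Any point $(a,v)$ satisfying the conclusion of the proposition lies in every such cylinder, so diameter vanishing forces uniqueness. By \propref{expcone1}, there exist $k_0\in\NN$ and $\mu>1$ such that $D\cT_c^{k_0}$ expands vectors in the cone $\cC$ by at least $\mu$, whence the width of $\cS_{c,(r_0,\ldots,r_{2m})}$ in the $\cC$-direction decays at rate $\mu^{-\lfloor m/k_0\rfloor}$. Applying the Duality Theorem in the form $\cT_c^{-k_0}=\cI_c^{-1}\circ\cT_{1/c}^{k_0}\circ\cI_c$ from (\ref{t conj}), the pushforward $\cC^*\equiv D\cI_c^{-1}(\cC^{1/c})$ is expanded by $D\cT_c^{-k_0}$ at the same rate, so $\cS_{c,(r_{-2k},\ldots,r_{-1})}$ has width in the $\cC^*$-direction decaying as $\mu^{-\lfloor k/k_0\rfloor}$.

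The main obstacle is establishing the transversality of the cones $\cC$ and $\cC^*$, since only then do the two width estimates control the full $\RR^2$-diameter. The cone $\cC$ is nonempty and explicitly identified by Propositions~\ref{a-monotone-1} and~\ref{monotone-4}, while $\cC^*=D\cI_c^{-1}(\cC^{1/c})$ requires a direct computation using the explicit involution $\cI_c(a,v)=\bigl((c-1-v)/(av),\,-v/c\bigr)$. On a heuristic level, transversality must hold: in the two-dimensional tangent space, if $\cC$ and $\cC^*$ shared a direction, that direction would be expanded both forward and backward under $D\cT_c^{k_0}$, contradicting the finite positive Jacobian. I would convert this heuristic into a bona fide calculation by checking that the image of the explicit generating vectors of $\cC^{1/c}$ under $D\cI_c^{-1}$ does not land in $\cC$ at any $(a,v)\in\Delta_c$ with $c\in(0.5,2)\setminus\{1\}$, which reduces to an elementary inequality in $a,v,c$.
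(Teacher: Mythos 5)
Your existence argument (periodic approximation via Theorem~\ref{attractor4}, then passage to a subsequential limit using continuity of rotation number inside a fixed combinatorial class) is essentially the argument the paper uses, modulo the presentation: the paper applies $\cT_c^{n/2}$ to the period-$n$ orbit and lets $n\to\infty$, you apply $\cT_c^{-m}$ for each fixed $m$. These are the same idea.

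Your uniqueness argument, however, departs from the paper and contains a genuine gap. The paper does not attempt to show that the bi-infinite cylinders $\cS_{c,(r_{-2k},\ldots,r_{2m})}$ shrink to points via a cone-field/horseshoe argument. Instead it treats the periodic case first: the curves $L_{c,(r_0,r_1,\ldots)}$ and $L_{c,(\ldots,r_{-2},r_{-1})}$ are identified as the stable and unstable manifolds of a hyperbolic periodic point (Theorem~\ref{attractor4}), and a homoclinic intersection would force one of these manifolds to fail to be a smooth curve, contradicting Theorem~\ref{attractor4}. The general case is then deduced from the periodic one by Proposition~\ref{transverse} (uniform transversality of the $L$-curves, imported from \cite{KT2}) plus continuity. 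This entirely sidesteps the need to compare the cones $\cC$ and $\cC^*$.

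Your route needs the transversality of $\cC$ and $\cC^*=D\cI_c^{-1}(\cC^{1/c})$, and you acknowledge you have not verified it — but the heuristic you offer to justify it is incorrect. It is \emph{not} true that a tangent vector cannot be norm-expanded by both $D\cT_c^{k_0}$ and $D\cT_c^{-k_0}$: take a fixed point where $D\cT_c^{k_0}=\mathrm{diag}(\lambda,\lambda^{-1})$ with $\lambda>1$ (Jacobian $=1$, perfectly finite and positive) and $v=(1,1)$; then $\|D\cT_c^{k_0}v\|=\|D\cT_c^{-k_0}v\|=\sqrt{\lambda^2+\lambda^{-2}}>\sqrt2=\|v\|$. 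So "expansion in both time directions'' does not contradict the Jacobian, and the transversality of the two cones must be established by the explicit computation you postpone; without it the argument does not close. A further implicit gap: turning the pointwise derivative bound of Proposition~\ref{expcone1} into the geometric statement that the cylinder width in the $\cC$-direction decays like $\mu^{-\lfloor m/k_0\rfloor}$ requires $D\cT_c$-invariance of the cone field (or some substitute), which the published text does not assert. Both issues are resolved in the paper's approach by working with the smooth invariant manifolds and the transversality result of \cite{KT2} rather than with the cone field directly.
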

\begin{proof}
Let us show that the intersection of the curves
$$\Omega\equiv L_{c,(\ldots,r_{-2},r_{-1})}\cap L_{c,(r_0,r_{-1},\ldots})$$ is non-empty.
For every even $n\in\NN$ let 
$$\rho_n\equiv [r_{-n},\ldots,r_{n-1},r_{-n},\ldots,r_{n-1},\ldots],$$
and let $(a_n,v_n)$ be the unique $\cT_c$-periodic point with period $n$ and 
$$\rho(\zeta_{a_n,v_n,c})=\rho_n.$$

Then, by continuity of the rotation number, every limit point of the sequence 
$\{(\cT_c)^{n/2}(a_n,v_n)\}$ lies in $\Omega$. Thus, 
$\Omega\neq\emptyset.$

Furthermore, let us show that for each bi-infinite sequence $(r_i)_{i\in\ZZ}$
the intersection $L_{c,(\ldots,r_{-2},r_{-1})}\cap L_{c,(r_0,r_{-1},\ldots})$ consists of a single point. We first note that in the case when 
$(r_i)_{i\in\ZZ}$ is a periodic sequence, the curves $L_{c,(\ldots,r_{-2},r_{-1})}$ and $L_{c,(r_0,r_{-1},\ldots})$ are the unstable and stable
manifolds of the unique periodic orbit with rotation number $[r_0,r_1,\ldots]$. If these manifolds have a homoclinic intersection point, 
then neither one of them could be a smooth curve, which would contradict \thmref{attractor4}.

The general case now follows by \propref{transverse} and considerations of continuity.

%%Assume that $\Omega$ contains two distinct points $(a,v)$ and $(a',v')$. Monotonicity of dependence
%%of the rotation number on small perturbations inside $\cC$ implies that there exist two
%%arbitrarily small perturbations of these points with the same {\it periodic} rotation number
%%$\rho.$ This contradicts 

\end{proof}

Let us denote $\cA_c$ the collection of all pairs $(a,v)_{(r_i)_{i\in\ZZ}}\in\Delta_c$. By construction,
we have:

\begin{prop}
\label{attractor2}
The map $\iota:\Sigma_\NN\to \cA_c$ given by
$\iota:(r_i)_{i\in\ZZ}\mapsto (a,v)_{(r_i)_{i\in\ZZ}}\in\Delta_c$ is a homeomorphism.
\end{prop}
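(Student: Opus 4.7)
The plan is to verify the four basic properties of a homeomorphism in turn. Well-definedness, injectivity, and surjectivity of $\iota$ are immediate consequences of \propref{attractor1}: the existence statement shows $\iota$ is defined on all of $\Sigma_\NN$, uniqueness yields injectivity, and surjectivity holds by the very definition of $\cA_c$ as the image of $\iota$. Only the continuity of $\iota$ and of $\iota^{-1}$ remains.

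For continuity of $\iota$, I fix $(r_i)_{i\in\ZZ}\in\Sigma_\NN$ and suppose $(r_i^{(n)})\to (r_i)$ as $n\to\infty$. The form of the metric on $\Sigma_\NN$ together with the discreteness of $\NN$ implies that for each fixed $k\in\NN$ one has $r_i^{(n)}=r_i$ for all $|i|\le 2k$, provided $n$ is large enough. For such $n$, both $\iota((r_i^{(n)}))$ and $\iota((r_i))$ lie in the closed set $\cS_{c,(r_{-2k},\ldots,r_{2k})}\subset \Delta_c$. These sets are nested and decreasing in $k$, and by \propref{attractor1} their intersection equals exactly $\{\iota((r_i))\}$. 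Since $\Delta_c$ is bounded, nested closed subsets of $\Delta_c$ with single-point intersection must have diameter tending to zero, and hence $\iota((r_i^{(n)}))\to \iota((r_i))$.

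For continuity of $\iota^{-1}$, suppose $(a^{(n)},v^{(n)})\to (a,v)$ in $\cA_c$. Continuity of the rotation number in parameters yields $\rho(\zeta_{a^{(n)},v^{(n)},c})\to \rho(\zeta_{a,v,c})$, and since the continued fraction digits of an irrational number are locally constant in the rotation number, the forward heights converge pointwise: $r_j^{(n)}=r_j$ eventually for each $j\ge 0$. For the backward heights I would invoke \eref{t conj}: $\cT_c^{-1}=\cI_{1/c}\circ \cT_{1/c}\circ \cI_c$. The involution $\cI_c$ is smooth on $\cD_c$ (its Jacobian is strictly positive there), and $\cT_{1/c}$ is continuous at $\cI_c(a,v)\in\cA_{1/c}$ by the same local-constancy-of-digits argument on the $1/c$ side. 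Thus $\cT_c^{-1}$ is continuous at $(a,v)$ within $\cA_c$, and inductively $\cT_c^{-k}(a^{(n)},v^{(n)})\to \cT_c^{-k}(a,v)$ for each $k\ge 1$; continuity of $\rho$ applied to these pre-iterates then yields convergence of all backward heights.

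The only nontrivial point in this program, and the main obstacle, is the claim that the nested closed sets $\cS_{c,(r_{-2k},\ldots,r_{2k})}$ shrink in diameter to their single-point intersection. For sequences of bounded type this follows directly from compactness of $\cA_c^B$ together with the uniform transversality in \propref{transverse}. For sequences of unbounded type, where some heights $r_i$ may grow, one must appeal to the geometric information on the curves $L_{c,\rho}$ from \cite{KT2} (invoked through \propref{transverse}) in order to rule out elongated configurations of these curvilinear rectangles when heights are large; this is precisely the point alluded to in Remark \ref{rem:bdedtype}.
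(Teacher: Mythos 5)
The paper offers no argument for this proposition beyond the phrase ``By construction, we have'', so your write-up is an attempt to fill in a step the authors regard as routine. The overall structure is reasonable: bijectivity from \propref{attractor1}, continuity of $\iota$ via shrinking of the curvilinear rectangles $\cS_{c,(r_{-2k},\ldots,r_{2k})}$, and continuity of $\iota^{-1}$ via continuity of the rotation number together with the duality formula \eref{t conj}.

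There is, however, a genuine error in the middle. The sentence ``Since $\Delta_c$ is bounded, nested closed subsets of $\Delta_c$ with single-point intersection must have diameter tending to zero'' is false as stated. Boundedness is not compactness: in the bounded set $(0,1)$ the sets $A_k=\{1/2\}\cup(0,1/k)$ are nested, closed in $(0,1)$, and have intersection $\{1/2\}$, yet $\operatorname{diam}(A_k)\geq 1/2$ for every $k$. To run a nested-intersection argument one needs the sets to be \emph{compact} (their closures in $\RR^2$), and then one has the additional burden of showing that the intersection of the closures is still the single point $\iota((r_i))$ --- i.e.\ that boundary accumulation does not occur. You also assert without justification that the $\cS$-sets are closed. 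None of these points is automatic from the definitions, which is why they cannot be waved away by boundedness. Your final paragraph, on the other hand, correctly identifies where the real work lies: for bounded-type sequences compactness of $\cA_c^B$ plus the uniform transversality of \propref{transverse} give the diameter decay, and for unbounded type one must invoke the geometric control on the curves $L_{c,\rho}$ from \cite{KT2}; this is exactly the caveat of Remark~\ref{rem:bdedtype}. So the last paragraph contains the actual content of the proof; the intermediate ``boundedness implies shrinking'' step should simply be deleted, as it contradicts the acknowledgment that follows it and is not a valid shortcut.
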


By \propref{attractor1},
\begin{equation}
\label{attractor3}
\rho(\zeta_{\iota(r_i),c})=[r_0,r_1,\ldots]\text{ and }\iota^{-1}\circ \cT_c\circ \iota=\sigma^2.
\end{equation}

Denote $\cP_c$ the dense subset of $\cA_c$ consisting of periodic orbits. For every periodic sequence $(r_i)_{i\in\ZZ}$,
consider the tangent vector field $T^s$ of the curves $L_{c,(r_0,r_1,\ldots)}$, and the tangent vector field 
$T^u$ of  $L_{c,(...,r_{-k},\ldots,r_0)} $. 
%%By Duality Theorem, and compactness considerations, the angle between $T^s$ and
%%$T^u$ is uniformly bounded at every point in $\cP_c$. 
By a simple diagonal convergence process, 
$T^s$ and $T^u$ 
can be extended to all of $\cA_c$ as a $D\cT_c$-invariant splitting of the tangent bundle. Furthermore, 
\propref{expcone1} implies that $D\cT$ uniformly expands $T^u$, and by Duality Theorem, $D\cT$ uniformly contracts $T^s$. Hence:
\begin{prop}
For $c\in(0.5,2)\setminus\{ 1\}$ the set $\cA_c$ is uniformly hyperbolic, with one stable and one unstable direction. The curves
$L_{c,(r_0,r_1,\ldots)}$ and $L_{c,(...,r_{-k},\ldots,r_0)} $ are the stable and the unstable foliation of $\cA_c$ respectively.
\end{prop}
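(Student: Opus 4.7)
The plan is to promote the hyperbolicity already established at periodic points in \thmref{attractor4} to all of $\cA_c$, using the density of $\cP_c\subset\cA_c$ together with the uniform expansion estimate of \propref{expcone1}. At each periodic point $(a_*,v_*)\in\cP_c$ of period $p$, \thmref{attractor4} provides a hyperbolic splitting $T_{(a_*,v_*)}\Delta_c=E^u_*\oplus E^s_*$, with $E^u_*$ tangent to $L_{c,(\ldots,r_{-1})}$ and $E^s_*$ tangent to $L_{c,(r_0,r_1,\ldots)}$.

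First I would show that the hyperbolicity rates are uniform over $\cP_c$. The expansion factor comes directly from \propref{expcone1}: once $k$ and $\delta$ are fixed, every vector in $\cC_{(a_*,v_*)}$, and in particular $E^u_*$ (which must lie in $\cC$ since it is the direction of exponential expansion under $D\cT_c^p$), is stretched by a factor of at least $(1+\delta)^k$ under $D\cT_c^k$, with the constants depending only on compactness of $\Delta_c$. The matching uniform contraction on $E^s_*$ is obtained from the involution $\cI_c$ and the conjugacy (\ref{t conj}), which intertwines $\cT_c$ with $\cT_{1/c}^{-1}$ and thereby converts the expansion statement on $\cC\subset T\Delta_{1/c}$ into a contraction statement on the image of $\cC$ inside $T\Delta_c$.

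Next I would extend $T^u,T^s$ to all of $\cA_c$ by a diagonal argument. Given $(a,v)=\iota((r_i))\in\cA_c$, select periodic sequences $(r^{(n)}_i)$ converging to $(r_i)$ in $\Sigma_\NN$; continuity of $\iota$ (\propref{attractor2}) gives $(a_n,v_n)\to(a,v)$. The cone $\cC_{(a_n,v_n)}$ depends continuously on $(a_n,v_n)$, so the unstable directions $E^u_n\subset\cC_{(a_n,v_n)}$ subsequentially accumulate on directions in $\cC_{(a,v)}$. Uniqueness of the cluster direction follows from the uniform projective contraction under $D\cT_c^{-k}$ inside the cone: two candidate limit directions would be dragged together under iterated pull-back, since uniform expansion of $\cC$ by $D\cT_c^k$ dually yields uniform projective contraction of $\cC$ by $D\cT_c^{-k}$. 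This defines $T^u$ continuously on $\cA_c$; the field $T^s$ is obtained analogously using the Duality Theorem applied to backward orbits. $D\cT_c$-invariance together with the uniform expansion and contraction estimates pass to the limit.

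It remains to identify $T^u_{(a,v)}$ and $T^s_{(a,v)}$ with the tangents of $L_{c,(\ldots,r_{-1})}$ and $L_{c,(r_0,r_1,\ldots)}$. By continuity of rotation number under renormalization, the local stable manifold $W^s_{\text{loc}}(a,v)$ of the now-hyperbolic set is contained in the level set $L_{c,(r_0,r_1,\ldots)}$; since both are continuous one-dimensional arcs through $(a,v)$ they locally coincide, so $T^s_{(a,v)}$ is tangent to $L_{c,(r_0,r_1,\ldots)}$. The unstable case is symmetric via the Duality Theorem and \propref{curve2}, and the uniform transversality of the two families (\propref{transverse}) ensures $T^u\oplus T^s$ is a genuine splitting. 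The main technical obstacle I anticipate is pinning down uniqueness of the cluster directions in the diagonal argument: without uniform hyperbolicity, cluster directions at non-periodic points could split, and it is precisely the uniform rate in \propref{expcone1} (paired with Duality) that forces the projective action of $D\cT_c$ on $\cC$ to be a uniform contraction, hence to pick out a single attracting direction at every point of $\cA_c$.
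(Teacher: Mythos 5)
Your proposal follows the paper's proof essentially line for line: define $T^u,T^s$ at the dense set of periodic points as the tangents to the unstable and stable curves produced by Theorem \ref{attractor4}, extend to all of $\cA_c$ by a diagonal limit, and get uniform expansion on $T^u$ from Proposition \ref{expcone1} with the matching uniform contraction on $T^s$ from the Duality Theorem; the paper leaves the ``diagonal convergence process'' and the uniqueness of the limit directions as an assertion, while you spell it out via projective contraction of the cone field, which is the standard way to fill that gap. One small caveat: your parenthetical claim that $E^u_*$ ``must lie in $\cC$ since it is the direction of exponential expansion'' is not automatic from Proposition \ref{expcone1} alone — that proposition says vectors \emph{in} $\cC$ are expanded, not that every expanded direction is in $\cC$; to make the step rigorous one should invoke forward invariance of the cone field (which also underlies your projective-contraction argument), or observe via Proposition \ref{curve1} and duality that the tangent to the unstable curve lies in $\overline{\cC}$. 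This is a point the paper also leaves implicit, so it is not a defect specific to your write-up.
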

Thus,
\begin{prop}
\label{prop:analytic}
The curves $L_{c,(r_0,r_1,\ldots)}$ and $L_{c,(...,r_{-k},\ldots,r_0)} $ are $C^\omega$-smooth.
\end{prop}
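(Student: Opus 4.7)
The plan is to combine the uniform hyperbolicity of $\cA_c$ just established with the real-analyticity of $\cT_c$ on each stratum $\Pi_{k,c}$, and to upgrade \propref{attractor5} from periodic orbits to all orbits via the analytic stable/unstable manifold theorem.

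First I would fix a point $(a,v)\in\cA_c$ with itinerary $(r_i)_{i\in\ZZ}$ and observe that its entire forward $\cT_c$-orbit lies in strata $\Pi_{r_{2n},c}$ where $\cT_c$ is jointly real-analytic in the parameters: indeed, the pair $\cT_c^n\zeta_{a,v,c}$ has prescribed height $r_{2n}$, and the compositions defining $\cT_c$ are rational in $(a,v)$. Consequently $\cT_c$ extends to a holomorphic map in a complex neighborhood of the orbit, while \propref{expcone1} together with the Duality Theorem yields uniform expansion on $E^u$ and uniform contraction on $E^s$ along this orbit, with constants depending only on $c$.

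Next I would run the graph-transform construction of $W^s_{\text{loc}}(a,v)$ in the holomorphic category. Starting with a holomorphic graph over a small complex neighborhood of $E^s_{(a,v)}$, the pullbacks by $\cT_c$ along the orbit remain holomorphic on a uniform complex polydisk (the uniformity of the hyperbolic splitting on $\cA_c$ ensures this), and converge to a holomorphic graph whose real trace coincides, by local uniqueness of stable manifolds, with a neighborhood of $(a,v)$ in $L_{c,(r_0,r_1,\ldots)}$. Thus $L_{c,(r_0,r_1,\ldots)}$ is $C^\omega$-smooth at $(a,v)$. The global leaf
\[
L_{c,(r_0,r_1,\ldots)} \;=\; \bigcup_{n\geq 0}\cT_c^{-n}\bigl(W^s_{\text{loc}}(\cT_c^n(a,v))\bigr)
\]
is then $C^\omega$-smooth as a whole, since by \eqref{t conj} the inverse $\cT_c^{-1}$ is conjugate through the analytic involution $\cI_c$ to $\cT_{1/c}$, and hence is real-analytic on the appropriate strata. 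For the unstable leaves I would invoke \propref{curve2}: the identity $L_{c,(\ldots,r_{-2},r_{-1})}=\cI_c(L_{1/c,(r_{-1},r_{-2},\ldots)})$ together with analyticity of $\cI_c$ reduces their smoothness to the stable case at parameter $1/c\in(0.5,2)\setminus\{1\}$, already handled above.

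The main obstacle is controlling the graph-transform in the \emph{analytic} category rather than merely in $C^k$ for each $k$: one must verify that the complexified orbit remains inside a common polydisk on which $\cT_c$ is holomorphic and on which the expansion/contraction rates furnished by \propref{expcone1} and Duality persist with uniform constants. A cleaner alternative, which sidesteps the complex-extension bookkeeping, is to approximate the given orbit by periodic orbits in $\cA_c$ (dense by Proposition~\ref{attractor2} and the density of periodic sequences in $\Sigma_\NN$), use \propref{attractor5} to obtain uniform bounds on the domains of analyticity of the corresponding periodic stable/unstable curves, and pass to the limit via Cauchy estimates.
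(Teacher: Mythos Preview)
Your proposal is correct and follows the same route the paper has in mind: the paper simply writes ``Thus,'' after establishing uniform hyperbolicity of $\cA_c$ and identifying the curves $L$ as the stable/unstable foliation, treating the $C^\omega$-smoothness as an immediate consequence of the analytic stable/unstable manifold theorem for a uniformly hyperbolic set of a real-analytic map. What you have written out---the holomorphic graph-transform along orbits lying in fixed strata $\Pi_{k,c}$, the globalization via analytic $\cT_c^{-1}$ from \eqref{t conj}, and the reduction of the unstable case to the stable case via $\cI_c$---is exactly the content the paper leaves implicit.
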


\end{document}